\newcommand\BibTeX{{\rmfamily B\kern-.05em \textsc{i\kern-.025em b}\kern-.08em
T\kern-.1667em\lower.7ex\hbox{E}\kern-.125emX}}
\newcolumntype{x}[1]{%
>{\centering\hspace{0pt}}p{#1}}%
\newtheorem{theorem}{Theorem}
\newtheorem{algorithm}[theorem]{Algorithm}
\newtheorem{lemma}[theorem]{Lemma}
\newcommand{\norm}[1]{\lVert#1\rVert} 							
\newcommand{\normb}[1]{\big\lVert#1\big\rVert} 					
\newcommand{\dotprod}[2]{\langle#1,#2\rangle} 					
\DeclareMathOperator{\Span}{span}								
\DeclareMathOperator{\blkdiag}{blkdiag}							
\DeclareMathOperator*{\diag}{diag}						
\DeclareMathOperator*{\rank}{rank}						
\newcommand{\R}{{\mathbb{R}}}       							
\newcommand{\N}{{\mathbb{N}}}       							
\m@th\displaystyle{##}$}{$\m@th\displaystyle{##}$\hfil}{\lbrace}{.}
\begin{document}

\runningheads{Gennadij~Heidel~and~Andy~Wathen}{Preconditioning for boundary control in fluid dynamics}

\title{Preconditioning for boundary control problems in incompressible fluid dynamics}

\renewcommand{\thefootnote}{\fnsymbol{footnote}}
\author{Gennadij~Heidel\renewcommand{\thefootnote}{\arabic{footnote}}\footnotemark[1]\renewcommand{\thefootnote}{\fnsymbol{footnote}}\corrauth~and~Andy~Wathen\renewcommand{\thefootnote}{\arabic{footnote}}\footnotemark[2]}

\address{\centering \renewcommand{\thefootnote}{\arabic{footnote}} \footnotemark[1]Fachbereich IV - Mathematik, Universit\"at Trier, 54286 Trier, Germany\\
\footnotemark[2]Mathematical Institute, University of Oxford, Radcliffe Observatory Quarter, Oxford, OX2 6GG, UK}

\corraddr{\href{mailto:heidel@uni-trier.de}{heidel@uni-trier.de}}

\begin{abstract}
PDE-constrained optimization problems arise in many physical applications, prominently in incompressible fluid dynamics. In recent
research, efficient solvers for optimization problems governed by the Stokes and Navier--Stokes equations have been developed which are
mostly designed for distributed control. Our work closes a gap by showing the effectiveness of an appropriately modified preconditioner to 
the case of Stokes boundary control. We also discuss the applicability of an analogous preconditioner for Navier--Stokes boundary control 
and provide some numerical results.
\end{abstract}

\keywords{preconditioning; PDE-constrained optimization; Stokes control; Navier--Stokes control; Oseen system; saddle point problems}

\maketitle

\section{Introduction} \label{sec:introduction}

\renewcommand{\thefootnote}{\arabic{footnote}}
\setcounter{footnote}{0}

Optimal control problems were first introduced by Lions \cite{lions1971optimal} and have recently attracted considerable research 
interest in applied mathematics, in terms of both theory \cite{troltzsch2010optimal} and computation \cite{borzi2012computational}. An
important area where problems of this kind naturally arise is that of fluid dynamics. An important problem within this field is that of
systems governed by the Navier--Stokes equations and their limiting case for very viscous flow, the Stokes equations.

For both steady-state Stokes and Navier--Stokes forward problems efficient solvers for finite element discretizations have been 
developed, see \cite{wathen1993fast,silvester1993fast} for the Stokes case and 
\cite{elman1999preconditioning,silvester2001efficient,kay2002preconditioner,elman2006block} for the Navier--Stokes case. These methods are
built on Krylov subspace iterations with specially designed preconditioners to achieve rapid convergence. An overview of state-of-the-art 
methods is given in the recent text by Elman et al. \cite{elman2014finite}, for a survey of preconditioning in general see
\cite{wathen2015preconditioning}.

More recently, solvers for control problems governed by the Stokes and Navier--Stokes equations based on these forward solvers have been
developed. Our work is largely based on a preconditioner developed by Rees and Wathen \cite{rees2011preconditioning} for distributed control
problems of the Stokes equations. A new development in PDE-constrained optimization is parameter-robust preconditioning, i.\,e.
methods whose quality does not depend on regularization parameters in the cost function. The notion was introduced by Zulehner et al. \cite{schoberl2007symmetric,schoberl2011robust}; more recently, Pearson and Wathen have developed remarkable results for distributed 
Poisson control \cite{pearson2012new}. This framework has been extended to Poisson boundary control and heat control
\cite{pearson2012regularization}, distributed Stokes control in the steady-state \cite{pearson2015development} and the time-dependent
case \cite{stoll2013allatonce}, and distributed steady-state Navier--Stokes control \cite{pearson2015preconditioned}.

This paper is organized as follows. In Section~\ref{sec:stokesproblem} we introduce the Stokes boundary control problem and
discuss its discretization and optimality conditions. In Section~\ref{sec:stokesprecond} we present a Rees--Wathen type preconditioner
for the optimality system of this problem. In Section~\ref{sec:navierproblem} we introduce the analogous Navier--Stokes boundary control
problem, discuss the nonliner iteration employed and provide the discretization and optimality conditions for the linearized problems.
In Section~\ref{sec:navierprecond} we discuss spectral properties of the linearized and discretized Navier--Stokes problem and develop
a Rees--Wathen type preconditioner for this problem. In Section~\ref{sec:numres} we present numerical results to highlight the performance
of our preconditioners, and in Section~\ref{sec:conclusions} we make some concluding remarks and discuss possible extensions of this work.

\section{The Stokes boundary control problem} \label{sec:stokesproblem}

Let $\varOmega$ be a channel domain in $\R^2$ or $\R^3$, and let $\widehat{\vec{v}}$ and $\widehat{p}$ be functions on $\varOmega$ that define
a desired velocity and pressure profile. We want to manipulate the inflow of the channel $\partial\varOmega_{\text{in}}$ in such a way that 
the Stokes flow profile is as close as possible to $(\widehat{\vec{v}},\widehat{p})$; in a mathematical sense, applying a force to the 
boundary of the channel is the same as imposing Neumann boundary conditions, this gives us a Neumann control problem in a natural way.

This may be formulated as minimizing a least-squares cost functional subject to the Stokes equations as the constraint, i.\,e.
\begin{equation} \label{eqn:stokesproblem}
	\begin{split}
		\min_{\vec{v},\,p,\,\vec{u}}\,
		& \frac{1}{2} \normb{\vec{v} - \widehat{\vec{v}}}_{L^2(\varOmega)^2}^2
		+ \frac{\alpha}{2} \normb{p - \widehat{p}}_{L^2(\varOmega)}^2
		+ \frac{\beta}{2} \normb{\vec{u}}_{L^2(\partial \varOmega_{\text{in}})^2}^2 \\
		& \text{such that}
		\begin{mycases}
			- \nabla^2 \vec{v} + \nabla p & = \vec{0} \quad \text{in} \;\, \varOmega, \\
			\nabla \cdot \vec{v} & = 0 \quad \text{in} \;\, \varOmega, \\
			\vec{v} &= \vec{0} \quad \text{on} \;\,  \partial\varOmega_D, \\
			\tfrac{\partial\vec{v}}{\partial n} - p \vec{n} & = \vec{u} \quad \text{on} \;\,  \partial\varOmega_{\text{in}}, \\
			\tfrac{\partial\vec{v}}{\partial n} - p \vec{n} & = \vec{0} \quad \text{on} \;\,  \partial\varOmega_{\text{out}}.
		\end{mycases}
	\end{split}
\end{equation}
\noindent Here the Dirichlet boundary $\varOmega_D$ represents the walls of the channel where the flow is equal to zero, and a zero-stress
boundary condition is chosen on the outflow $\partial\varOmega_{\text{out}}$; this ensures that the fluid leaves the channel domain freely
without a force applied to it. The desired pressure is typically normalized to ${\widehat{p} \equiv 0}$. The positive regularization 
parameters $\alpha$ and $\beta$ are chosen a priori; as long as they are not too large, the main focus of the cost function lies on the 
velocity term which characterizes the desired flow profile. The outward normal unit vector is denoted by $\vec{n}$, as usual.

Problem formulations with a tracking-type objective function as in \eqref{eqn:stokesproblem} are an important class of optimal control problems
in the literature, see \cite{troltzsch2010optimal}. Other important classes of boundary control problems in fluid dynamics, which are beyond the
scope of this paper, are minimum vorticity control and Dirichlet boundary control, e.\,g. \cite{heinkenschloss1998formulation}.

There are two different approaches to the discretization of this problem. We can either discretize first and then find optimality
conditions for the discretized system; or we can find optimality conditions for the infinite-dimensional problem \eqref{eqn:stokesproblem} 
using the formal Lagrange technique \cite{lions1971optimal,troltzsch2010optimal}, and then discretize the obtained equations. Since the 
Stokes equations are selfadjoint, both approaches lead to the same discrete optimality conditions, as long as we use an adjoint-consistent
discretization. Therefore, we only consider the discretize-then-optimize approach here.

Let $\{\vec{\varphi}_j\}_{j=1}^{n_v+n_\partial}$ and $\{\psi_k\}_{k=1}^{n_p}$ be finite element bases that form a stable mixed finite 
element discretization for the Stokes equations---see, for example, \cite[Chapter~3]{elman2014finite} for further details. Note that, in 
general, we would also need basis functions $\vec{\varphi}_{n_v+1},\cdots,\vec{\varphi}_{n_v+n_\partial}$ which interpolate the Dirichlet
boundary data; this need not be considered here since the Dirichlet boundary data in \eqref{eqn:stokesproblem} is identically zero. Since the 
control $\vec{u}$ is another unknown, we also choose a finite basis element basis $\{\vec{\chi}_l\}_{l=1}^{n_u}$ which lives on the inflow boundary
of the channel. Since the boundary of a $d$-dimensional domain $\varOmega$ is a manifold of dimension $d-1$, the control space is canonically 
isomorphic to a finite element space on a domain of dimension $d-1$; we will not distinguish between this and a boundary finite element 
space. Let $\vec{v}_h = \sum_{j=1}^{n_v} \bm{v}_j \vec{\varphi}_j$,
$p_h = \sum_{k=1}^{n_p} \bm{p}_k \psi_k$ and $\vec{u}_h = \sum_{l=1}^{n_u} \bm{u}_l \vec{\chi}_l$ be finite-dimensional approximations
to $\vec{v}$, $p$ and $\vec{u}$, respectively. Then the discretized Stokes equations are are given by
\begin{equation} \label{eqn:discrstokes}
	\begin{bmatrix}
		\bm{A} & B^T \\
		B & O
	\end{bmatrix}
	\begin{pmatrix}
		\mathbf{v} \\
		\mathbf{p}
	\end{pmatrix} =
	\begin{pmatrix}
		\widehat{\bm{Q}}\mathbf{u} \\
		\mathbf{0}
	\end{pmatrix},
\end{equation}
\noindent where $\mathbf{v}$, $\mathbf{p}$ and $\mathbf{u}$ are the coefficient vectors in the expansions of $\vec{v}_h$, $p_h$ and 
$\vec{u}_h$, respectively, and the matrices are given by $\bm{A}=[\int_{\varOmega}\nabla\vec{\varphi}_j:\nabla \vec{\varphi}_i]$,
${B=[\int_{\varOmega}\psi_k\nabla\cdot\vec{\varphi}_j]}$ and
$\widehat{\bm{Q}}=[\int_{\partial\varOmega_{\text{in}}}\vec{\varphi}_i\cdot\vec{\chi}_l]$. Note that we use the convention to denote
Gramian matrices obtained from vector-valued functions by bold letters.

The discretized cost functional of \eqref{eqn:stokesproblem} is given by
\begin{equation} \label{eqn:discrcost}
	\min_{\mathbf{v},\,\mathbf{p},\,\mathbf{u}}\,
	\frac{1}{2} \dotprod{\bm{Q}_{\vec{v}}\mathbf{v}}{\mathbf{v}} - \dotprod{\mathbf{b}}{\mathbf{v}}
	+ \frac{\alpha}{2} \dotprod{Q_p\mathbf{p}}{\mathbf{p}} - \alpha\dotprod{\mathbf{d}}{\mathbf{p}}
	+ \frac{\beta}{2}\dotprod{\bm{Q}_{\vec{u}}\mathbf{u}}{\mathbf{u}},
\end{equation}
\noindent where the mass matrices are given by $\bm{Q}_{\vec{v}}=[\int_{\varOmega}\vec{\varphi}_i\cdot\vec{\varphi}_j]$,
$Q_p = [\int_{\varOmega} \psi_i \psi_j]$ and $\bm{Q}_{\vec{u}}=[\int_{\partial\varOmega_{\text{in}}}\vec{\chi}_i\cdot\vec{\chi}_j]$, and the 
vectors $\mathbf{b}=(\int_\varOmega\vec{\varphi}_j\cdot\widehat{\vec{v}})$ and $\mathbf{d}=(\int_\varOmega \psi_k \cdot \widehat{p})$ 
contain the linear terms.

In practice, it is convenient to choose the bases $\{\vec{\varphi}_j\}$ for $\vec{v}_h$ and $\{\vec{\chi}_l\}$ for $\vec{u}_h$
such that for every $l$ there exists some $j(l)$ with ${\vec{\chi}_l=\vec{\varphi}_{j(l)}|_{\partial\varOmega_{\text{in}}}}$. Then the 
$j^{\text{\tiny th}}$ row of the mixed mass matrix $\widehat{\bm{Q}}$ will be equal to the $l^{\text{\tiny th}}$ row of $\bm{Q}_{\vec{u}}$
if $j=j(l)$ for some $l$, and zero otherwise. We will call this control discretization control-consistent. Throughout this paper, we will 
use a control-consistent Taylor--Hood approximation \cite{taylor1973numerical}, i.\,e. a $\pmb{Q}_2$ approximation for the velocity (and hence for 
the control) and a $\pmb{Q}_1$ approximation for the pressure. 

As is usual, we approximate all velocity space components using a single scalar finite element space, which is given by a basis 
$\{\varphi_j\}$. Then, in two dimensions ${\bm{Q}_{\vec{v}}=\blkdiag(Q_v,Q_v)}$, where ${Q_v=[\int_{\varOmega} \varphi_i \varphi_j]}$, and 
analogously ${\bm{Q}_{\vec{u}}=\blkdiag(Q_u,Q_u)}$ and ${\bm{A}=\blkdiag(A,A)}$. The extension to three dimensions is obvious.

If we introduce adjoint variables $\bm{\uplambda}$ and $\bm{\upmu}$, then the KKT conditions for the discretized optimization problem are
given by
\begin{equation} \label{eqn:stokeskkt}
	\begin{bmatrix}
		\bm{Q}_{\vec{v}} & O & O & \bm{A} & B^T \\
		O & \alpha Q_p & O & B & O \\
		O & O & \beta\bm{Q}_{\vec{u}} & -\widehat{\bm{Q}}^T & O \\
		\bm{A} & B^T & -\widehat{\bm{Q}} & O & O \\
		B & O & O & O & O
	\end{bmatrix}
	\begin{pmatrix}
		\mathbf{v} \\
		\mathbf{p} \\
		\mathbf{u} \\
		\bm{\uplambda} \\
		\bm{\upmu}
	\end{pmatrix} =
	\begin{pmatrix}
		\mathbf{b} \\
		\alpha\mathbf{d} \\
		\mathbf{0} \\
		\mathbf{0} \\
		\mathbf{0} \\
	\end{pmatrix}.
\end{equation}
\noindent Note that the discrete cost function \eqref{eqn:discrcost} is strictly convex, thus the solution of \eqref{eqn:stokeskkt} is
guaranteed to be the global minimizer.

A well-known analytic solution of the Stokes equations on a channel domain is the Poiseuille flow, see \cite[pp.\,122ff]{elman2014finite}. 
On a simple square domain $\varOmega=(-1,1)^2$ where the left-hand boundary $\partial\varOmega_{\text{in}} = \{-1\}\times(-1,1)$ is
the inflow and the right-hand boundary $\partial\varOmega_{\text{out}} = \{1\}\times(-1,1)$ is the outflow, the velocity solution is given 
by $\vec{v} = (1-y^2,0)^T$. This describes a straight horizontal movement with a parabolic flow profile whose maximum is in the middle of
the channel. In our model problem we want to restrict this flow profile to the upper half of the channel 
with no fluid movement in the lower half, i.\,e. the desired velocity is given by
\begin{equation} \label{eqn:vhat}
	\widehat{v}_x =
	\begin{cases}
	4y - 4y^2 & \text{if } 0 \leq y < 1,\\
	0 & \text{if } -1 < y < 0,
	\end{cases}
	\qquad \widehat{v}_y = 0.
\end{equation}

\section{Preconditioning for the Stokes problem} \label{sec:stokesprecond}

The KKT matrix in \eqref{eqn:stokeskkt} is clearly of block saddle point structure in the form
\begin{equation*}
	\begin{bmatrix}
		\mathcal{A} & \mathcal{B}^T \\
		\mathcal{B} & \mathcal{O}
	\end{bmatrix}
\end{equation*}
\noindent with the blocks $\mathcal{A} = \blkdiag(\bm{Q}_{\vec{v}},Q_p,\bm{Q}_{\vec{u}})$ and 
\begin{equation*}
	\mathcal{B} = \begin{bmatrix} 
	\bm{A}&B^T&-\widehat{Q} \\
	B&O&O
	\end{bmatrix}.
\end{equation*}
For a comprehensive survey of 
numerical methods for such matrices see \cite{benzi2005numerical}.

We would like to work with preconditioned Krylov subspace methods. A well-known property of saddle point matrices is their indefiniteness, 
therefore, the method of choice is the MINRES iteration of Paige and Saunders \cite{paige1975solution}. The crucial part of an efficient 
method is the right choice of the preconditioner. A block diagonal preconditioner is given by
\begin{equation} \label{eqn:bd}
	\begin{bmatrix}
		\mathcal{A} & \mathcal{O} \\
		\mathcal{O} & \mathcal{S}
	\end{bmatrix},
\end{equation}
\noindent where $\mathcal{S}=\mathcal{B}\mathcal{A}^{-1}\mathcal{B}^T$ is the Schur complement. This preconditioner is ideal in the sense
that the preconditioned system will have only three distinct eigenvalues, namely $1$ and $(1\pm\sqrt{5})/2$, see \cite{murphy2000note}.
However, the practical application of this preconditioner requires the solution of systems with $\mathcal{A}$ and $\mathcal{S}$ which can be
expected to be prohibitively expensive, therefore positive definite approximations $\widetilde{\mathcal{A}} \approx \mathcal{A}$ and 
$\widetilde{\mathcal{S}} \approx \mathcal{S}$ are used. This gives good clustering of the eigenvalues as long as the approximations are 
spectrally close to the exact operators, see \cite[Theorem~4.7]{elman2014finite} for a rigorous statement.

Here we describe approximations for $\mathcal{A}$ and $\mathcal{S}$ developed by Rees and Wathen \cite{rees2011preconditioning} for distributed
Stokes control and justify their usefulness for boundary control.

\subsection{Approximation of the (1,1) block} \label{subsec:11block}

Since the (1,1) block has a block-diagonal structure itself, it is sufficient to approximate all of the three blocks separately. A result 
for a general mass matrix $Q$ due to Wathen \cite{wathen1987realistic} says: for $D=\diag(Q)$, the eigenvalues of $D^{-1}Q$ are bounded 
below and above by some constants $\theta$ and $\varTheta$, independent of mesh size, and these constants can be calculated explicitly as minimal and maximal 
eigenvalues of diagonally scaled element mass matrices. For a control-consistent Taylor--Hood approximation they are given in 
Table~\ref{tab:massbounds}.

\begin{table}[ht]
	\centering
	\caption{Eigenvalue bounds for diagonally scaled mass matrices for a $\pmb{Q}_2$-$\pmb{Q}_1$ approximation (valid for any domain).} 
	\label{tab:massbounds}
	\begin{tabular}{|c||c|c|} \hline
 		  & $\theta$  & $\varTheta$  \\ \hline\hline
  		$\bm{Q}_{\vec{v}}$ & $1/4$ & $25/16$ \\ \hline
  		$Q_p$   & $1/4$ & $9/4$ \\ \hline
  		$\bm{Q}_{\vec{u}}$        & $1/2$  & $5/4$ \\ \hline
	\end{tabular}
\end{table}

Therefore, diagonal scaling could be used as a good preconditioner for $\mathcal{A}$. An even better result can by achieved by using a fixed 
number of steps of a Chebyshev semi-iteration \cite{golub1961chebyshev}, which accelerates the convergence of a simple splitting method by 
substituting an iterate by a linear combination of all previous iterates. The optimal linear combination can be found from the eigenvalue 
bounds in Table~\ref{tab:massbounds} and the minimax property of the Chebyshev polynomials. The usefulness of the Chebyshev semi-iteration 
for problems with the mass matrix was shown by Wathen and Rees \cite{wathen2009chebyshev}.

\begin{table}[t]
	\centering
	\caption{Minimal and maximal eigenvalues and condition numbers of $M_{\text{C}}^{-1}Q$ with $20$ iterations for various mass matrices.}
	\label{tab:masscheb}
	\begin{tabular}{|c||c|c|c|} \hline
 		 & $\lambda_{\min}$  & $\lambda_{\max}$ & $\kappa$ \\ \hline\hline
  		$\bm{Q}_{\vec{v}}$ & $0.999999912603445$ & $1.000000087211418$ & $1.000000235482498$ \\ \hline
  		$Q_p$ & $0.999998092651363$ & $1.000001906960303$ & $1.000004093538232$ \\ \hline
  		$\bm{Q}_{\vec{u}}$ & $0.999999999999775$  & $1.000000000000209$ & $1.000000000000467$ \\ \hline
	\end{tabular}
\end{table}

To quantify this, we calculate the eigenvalues of $M_\text{C}^{-1}Q$, where $M_\text{C}$ is the preconditioner given by $20$ Chebyshev 
steps with initial guess equal to zero, see Table~\ref{tab:masscheb}. It shows that $20$ steps are enough to get very good approximation for 
all mass matrices. For more numerical results, see Rees and Stoll \cite[Table~I]{rees2010block}.

\subsection{Approximation of the Schur complement} \label{subsec:schur}

Now we want an approximation for the Schur complement
\begin{equation*}
	\begin{split}
		\mathcal{S} & = \mathcal{B} \mathcal{A}^{-1} \mathcal{B}^T = 
		\begin{bmatrix}
			\bm{A} & B^T & -\widehat{Q} \\
			B & O & O
		\end{bmatrix}
		\begin{bmatrix}
			\bm{Q}_{\vec{v}}^{-1} & O & O \\
			O & \tfrac{1}{\alpha} Q_p^{-1} & O \\
			O & O & \tfrac{1}{\beta} \bm{Q}_{\vec{u}}^{-1}
		\end{bmatrix}
		\begin{bmatrix}
			\bm{A} & B^T \\
			B & O \\
			-\widehat{Q}^T & O
		\end{bmatrix} \\
		& = \underbrace{\begin{bmatrix}
			\bm{A} & B^T\\
			B & O
		\end{bmatrix}}_{\eqqcolon \mathcal{K}}
		\underbrace{\begin{bmatrix}
			\bm{Q}_{\vec{v}}^{-1} & O \\
			O & \tfrac{1}{\alpha} Q_p^{-1} \\
		\end{bmatrix}}_{\eqqcolon \mathcal{Q}^{-1}}
		\begin{bmatrix}
			\bm{A} & B^T \\
			B & O \\
		\end{bmatrix}
		+ \tfrac{1}{\beta} \underbrace{\begin{bmatrix}
			\widehat{Q} \bm{Q}_{\vec{u}}^{-1} \widehat{Q}^T & O \\
			O & O \\
		\end{bmatrix}}_{\eqqcolon \mathcal{L}}.
	\end{split}
\end{equation*}
\noindent So, $\mathcal{S} = \mathcal{K}\mathcal{Q}^{-1}\mathcal{K}+\tfrac{1}{\beta}\mathcal{L}$. Note that the matrix $\mathcal{K}$ is
the same Stokes operator as in \eqref{sec:stokesproblem}. This additive structure of the Schur complement makes exact precondtitioning 
difficult. For problems of this kind, Rees, Dollar and Wathen \cite[Corollary~3.3]{rees2010optimal} suggest dropping the second term---and 
hence using the approximation $\widetilde{\mathcal{S}} = \mathcal{K}\mathcal{Q}^{-1}\mathcal{K}$---and present eigenvalue bounds
(depending on $\beta$) for a control problem governed by the Poisson equation. This is also the strategy used in the Rees--Wathen 
preconditioner. The intuitive reasoning behind this is that the first summand  clearly carries more information in some sense---it contains 
the discrete Stokes operator whereas the second summand consists only of mass matrices which can be thought of as indentity or natural 
inclusion operators in some finite element spaces. Therefore, if $\beta$ is sufficently large (hence $1/\beta$ sufficiently small) one can 
hope that this gives a reasonable approximation. In \cite{rees2011preconditioning} Rees and Wathen show that this strategy applied to 
distributed control gives good results with respect to the grid size for $\beta \geq 10^{-4}$.

Ideally, we would like to find a regularization-robust preconditioner for our optimal control problem, i.\,e. the quality of preconditioning 
should be independent of the regularization parameter $\beta$. Pearson and Wathen \cite{pearson2012new} have proved the existence of such 
preconditioners for distributed Poisson control problems. In \cite{pearson2015development} a preconditioner for distributed control of the 
Stokes equations is presented which shows regularization-robust behaviour in numerics. Unfortunately, the framework of the Pearson--Wathen 
preconditioner and of the derived preconditioner for the Stokes equations heavily relies on the fact that in the case of distributed control, 
the control $\mathbf{u}$ is just a scalar multiple of the adjoint $\bm{\uplambda}$, and thus, the optimality system can be reduced to a $4 
\times 4$ block structure. This is clearly not applicable to our situation, as $\mathbf{u}$ and $\bm{\uplambda}$ do not even have the same 
dimension!

By construction, the Rees--Wathen preconditioner for Stokes control cannot be regularization-robust. However, in our case it can be proved
to be ``almost regularization-robust'' in the sense that $\rank(\tfrac{1}{\beta}\mathcal{L}) = n_u \ll n_v+n_p = \rank(\mathcal{S})$. Thus,
all we lose by our choice of approximation is a low-rank perturbation. The analysis of symmetric rank-$1$-perturbations is due to 
Wilkinson \cite[pp.\,87ff]{wilkinson1965algebraic} and can be written in the form of the following lemma (here and in the 
rest of the paper we assume the usual ordering $\lambda_n \leq \dotsb \leq \lambda_1$ for the eigenvalues of a symmetric
$n \times n$ matrix).

\begin{lemma} \label{lem:lowrank}
	\cite[Theorem~8.1.8]{golub2013matrix} Suppose $B = A +\tau \mathbf{c}\mathbf{c}^T$ where $A \in \mathbb{R}^{n \times n}$ is symmetric 
	and $\tau \geq 0$. Then
	\begin{equation*}
		\lambda_i(B) \in [\lambda_i(A),\lambda_{i-1}(A)], \quad \text{for} \;\, i = 2,\dots,n.
	\end{equation*}
\end{lemma}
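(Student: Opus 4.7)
The plan is to prove the two-sided bound separately, handling the easy lower bound $\lambda_i(A) \leq \lambda_i(B)$ first and then the upper bound $\lambda_i(B) \leq \lambda_{i-1}(A)$, which is the substantive content of the lemma and captures the effect of the rank-one structure.

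For the lower bound, I would observe that $\tau \mathbf{c}\mathbf{c}^T$ is positive semidefinite (since $\tau \geq 0$), so $\mathbf{x}^T B \mathbf{x} \geq \mathbf{x}^T A \mathbf{x}$ for every $\mathbf{x}\in\mathbb{R}^n$. Applying the Courant--Fischer max-min characterization
\begin{equation*}
\lambda_i(A) = \max_{\dim V = i}\;\min_{\mathbf{x}\in V\setminus\{0\}}\frac{\mathbf{x}^T A\mathbf{x}}{\mathbf{x}^T\mathbf{x}},
\end{equation*}
pointwise monotonicity gives $\lambda_i(B) \geq \lambda_i(A)$ immediately, which is essentially just Weyl's monotonicity theorem.

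For the upper bound I would switch to the dual min-max form
\begin{equation*}
\lambda_i(B) = \min_{\dim W = n-i+1}\;\max_{\mathbf{x}\in W\setminus\{0\}}\frac{\mathbf{x}^T B\mathbf{x}}{\mathbf{x}^T\mathbf{x}},
\end{equation*}
and exhibit one specific subspace $W$ that does the job. Let $\mathbf{v}_1,\dots,\mathbf{v}_n$ be an orthonormal eigenbasis of $A$ with $A\mathbf{v}_j=\lambda_j(A)\mathbf{v}_j$, and let $U = \operatorname{span}\{\mathbf{v}_{i-1},\mathbf{v}_i,\dots,\mathbf{v}_n\}$, so $\dim U = n-i+2$. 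By Rayleigh's principle, $\mathbf{x}^T A \mathbf{x} \leq \lambda_{i-1}(A)\,\mathbf{x}^T\mathbf{x}$ for every $\mathbf{x}\in U$. Now set $W = U \cap \mathbf{c}^{\perp}$. Since $\mathbf{c}^{\perp}$ has codimension at most one, the dimension identity gives
\begin{equation*}
\dim W \geq (n-i+2) + (n-1) - n = n-i+1.
\end{equation*}
For any $\mathbf{x}\in W$ we have $\mathbf{c}^T\mathbf{x}=0$, hence $\mathbf{x}^T B\mathbf{x} = \mathbf{x}^T A\mathbf{x} + \tau(\mathbf{c}^T\mathbf{x})^2 = \mathbf{x}^T A\mathbf{x} \leq \lambda_{i-1}(A)\,\mathbf{x}^T\mathbf{x}$. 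Restricting $W$ (if necessary) to a subspace of dimension exactly $n-i+1$ can only shrink the maximum, so plugging into the min-max formula yields $\lambda_i(B)\leq\lambda_{i-1}(A)$.

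The only genuinely delicate point is the dimension count for $W=U\cap\mathbf{c}^{\perp}$ and the verification that the resulting subspace has dimension at least $n-i+1$; the rest is a direct application of Courant--Fischer together with the vanishing of the rank-one perturbation on $\mathbf{c}^{\perp}$. Note also that the hypothesis $i \geq 2$ is used precisely so that $\lambda_{i-1}(A)$ is defined and $\dim U = n-i+2 \geq 2$, leaving room to intersect with a codimension-one subspace without collapsing to zero.
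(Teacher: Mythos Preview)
Your argument is correct. The lower bound is indeed an immediate consequence of Weyl's monotonicity, and for the upper bound your choice of $W = U \cap \mathbf{c}^{\perp}$ with $U = \Span\{\mathbf{v}_{i-1},\dots,\mathbf{v}_n\}$ is exactly the right test subspace for the Courant--Fischer min--max characterization; the dimension count and the vanishing of the rank-one term on $\mathbf{c}^{\perp}$ are handled cleanly.

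Note, however, that the paper does not actually supply a proof of this lemma: it is quoted verbatim as \cite[Theorem~8.1.8]{golub2013matrix} and used as a black box in the inductive proof of Theorem~\ref{thm:lowrank}. So there is no ``paper's own proof'' to compare against here---you have provided a self-contained argument where the authors simply cite the literature. Your proof is the standard one found in most matrix-analysis texts (including, essentially, the cited reference), so in that sense it is consistent with what the citation points to.
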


By an inductive argument, this can be generalized to additive perturbations of arbitrary rank.

\begin{theorem} \label{thm:lowrank}
	Suppose $B = A + L$ where $A,\,L \in \mathbb{R}^{n \times n}$ are symmetric, $L$ is positive semidefinite, and 
	$\mathrm{rank}(L) \leq m < n$. Then
	\begin{equation*}
		\lambda_i(B) \in [\lambda_i(A),\lambda_{i-m}(A)], \quad \text{for} \;\, i = m+1,\dots,n.
	\end{equation*}
\end{theorem}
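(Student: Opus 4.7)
The plan is to reduce the theorem to an $m$-fold iteration of Lemma \ref{lem:lowrank} via the spectral decomposition of $L$. Since $L$ is symmetric positive semidefinite with $r := \mathrm{rank}(L) \leq m$, we can write $L = \sum_{k=1}^{r} \tau_k \mathbf{c}_k \mathbf{c}_k^T$ with $\tau_k \geq 0$ and orthonormal $\mathbf{c}_k$. Define the sequence $A_0 := A$ and $A_k := A_{k-1} + \tau_k \mathbf{c}_k \mathbf{c}_k^T$ for $k = 1,\dots,r$, so that $A_r = B$. Each $A_k$ is symmetric, and Lemma \ref{lem:lowrank} applies at every step.

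Next I would apply the lemma inductively. For any $k \in \{1,\dots,r\}$ and any $i \in \{2,\dots,n\}$,
\begin{equation*}
\lambda_i(A_{k-1}) \leq \lambda_i(A_k) \leq \lambda_{i-1}(A_{k-1}).
\end{equation*}
Chaining the lower bounds is immediate: $\lambda_i(B) = \lambda_i(A_r) \geq \lambda_i(A_{r-1}) \geq \dots \geq \lambda_i(A_0) = \lambda_i(A)$. For the upper bound, I chain the other inequality, shifting the index down by one at each step:
\begin{equation*}
\lambda_i(A_r) \leq \lambda_{i-1}(A_{r-1}) \leq \lambda_{i-2}(A_{r-2}) \leq \dots \leq \lambda_{i-r}(A_0) = \lambda_{i-r}(A).
\end{equation*}
This chain is valid provided $i - r \geq 1$ at each stage, which follows from the hypothesis $i \geq m+1 \geq r+1$. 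Finally, using the ordering convention $\lambda_1 \geq \lambda_2 \geq \dots \geq \lambda_n$ and the fact that $r \leq m$ gives $\lambda_{i-r}(A) \leq \lambda_{i-m}(A)$, yielding the desired upper bound $\lambda_i(B) \leq \lambda_{i-m}(A)$.

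The argument is entirely routine once the spectral decomposition is in place; the only thing requiring care is bookkeeping on the indices, specifically verifying that each iterated application of Lemma \ref{lem:lowrank} stays within the valid index range $\{2,\dots,n\}$ as we shift down from $i$ to $i-r$. The restriction $i \geq m+1$ in the statement is exactly what makes this index-shifting legitimate. No further obstacles arise, since we rely only on the structure of rank-one positive semidefinite perturbations and never on the specific form of $A$ or the vectors $\mathbf{c}_k$.
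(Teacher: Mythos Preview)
Your proof is correct and follows essentially the same route as the paper: both arguments decompose $L$ as a sum of rank-one positive semidefinite terms and then apply Lemma~\ref{lem:lowrank} iteratively, with the paper phrasing this as a formal induction on $m$ and you unwinding the same induction explicitly as a chain of inequalities on the sequence $A_0,\dots,A_r$. The content is identical.
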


\begin{proof}
	For $m=0$ there is nothing to prove. Let the statement be true for some $m \in \N_0$ and $L$ be a symmetric positive semidefinite 
	matrix with $\rank(L) \leq m+1$. Then $L=K+\tau_{m+1}\mathbf{c}_{m+1}\mathbf{c}_{m+1}^T$ can be written as 
	\begin{equation*}
		L = \underbrace{\sum_{j=1}^m\tau_j\mathbf{c}_j\mathbf{c}_j^T}_{\eqqcolon K} + \tau_{m+1}\mathbf{c}_{m+1}\mathbf{c}_{m+1}^T
	\end{equation*}
	with $\tau_{m+1} \geq 0$. Then $\rank(K) \leq m$ and, by the induction hypothesis,
	\begin{align*}
		\lambda_i(A+K) & \in [\lambda_i(A),\lambda_{i-m}(A)], \quad \text{for} \;\, i = m+1,\dots,n,
		\intertext{and by Lemma~\ref{lem:lowrank},}
		\lambda_i(B) & \in [\lambda_i(A+K),\lambda_{i-1}(A+K)], \quad \text{for} \;\, i = 2,\dots,n.
		\intertext{Combining these, we get}
		\lambda_i(B) & \in [\lambda_i(A),\lambda_{i-m-1}(A)], \quad \text{for} \;\, i = m+2,\dots,n,
	\end{align*}
	as desired.
\end{proof}

Therefore, if we drop the low-rank perturbation, we get a parameter-robust approximation for all but $n_u$ eigenvalues. This is illustrated
in Figure~\ref{fig:paramrobust}, where $n_v = 162$, $n_p = 25$ and $n_u = 18$. The red line indicates ${\rank(\mathcal{K})-\rank{\mathcal{L}}}={n_v+n_p-n_u}$; we observe a good clustering of 
all but the largest $n_u$ eigenvalues around $1$ even for a fairly small $\beta$, the non-clustered eigenvalues show a behaviour dependent 
on $\beta$.

\begin{figure}[t]
	\centering
	\begin{tabular} {cc}
		\includegraphics[scale=0.125]{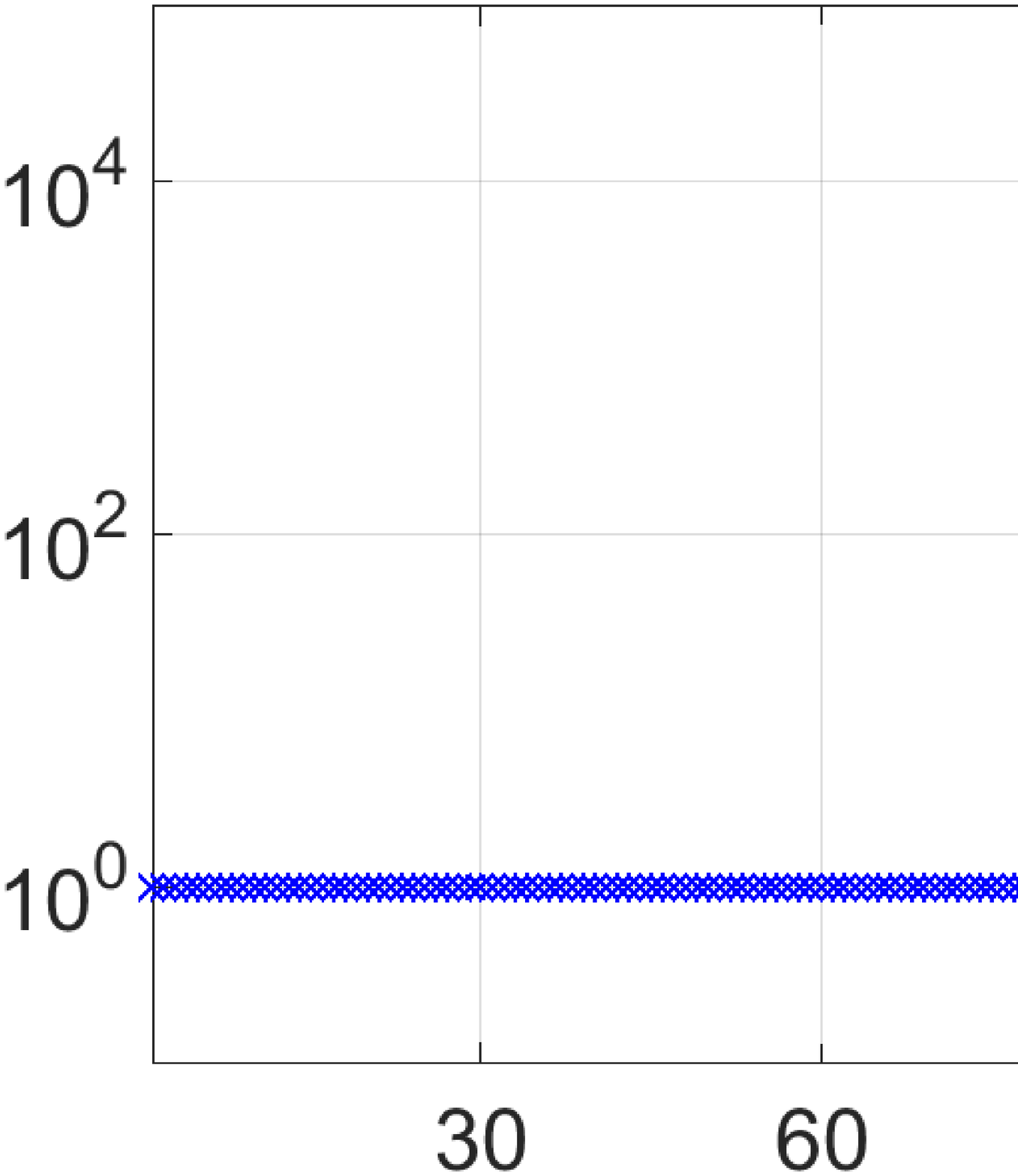} &
		\includegraphics[scale=0.125]{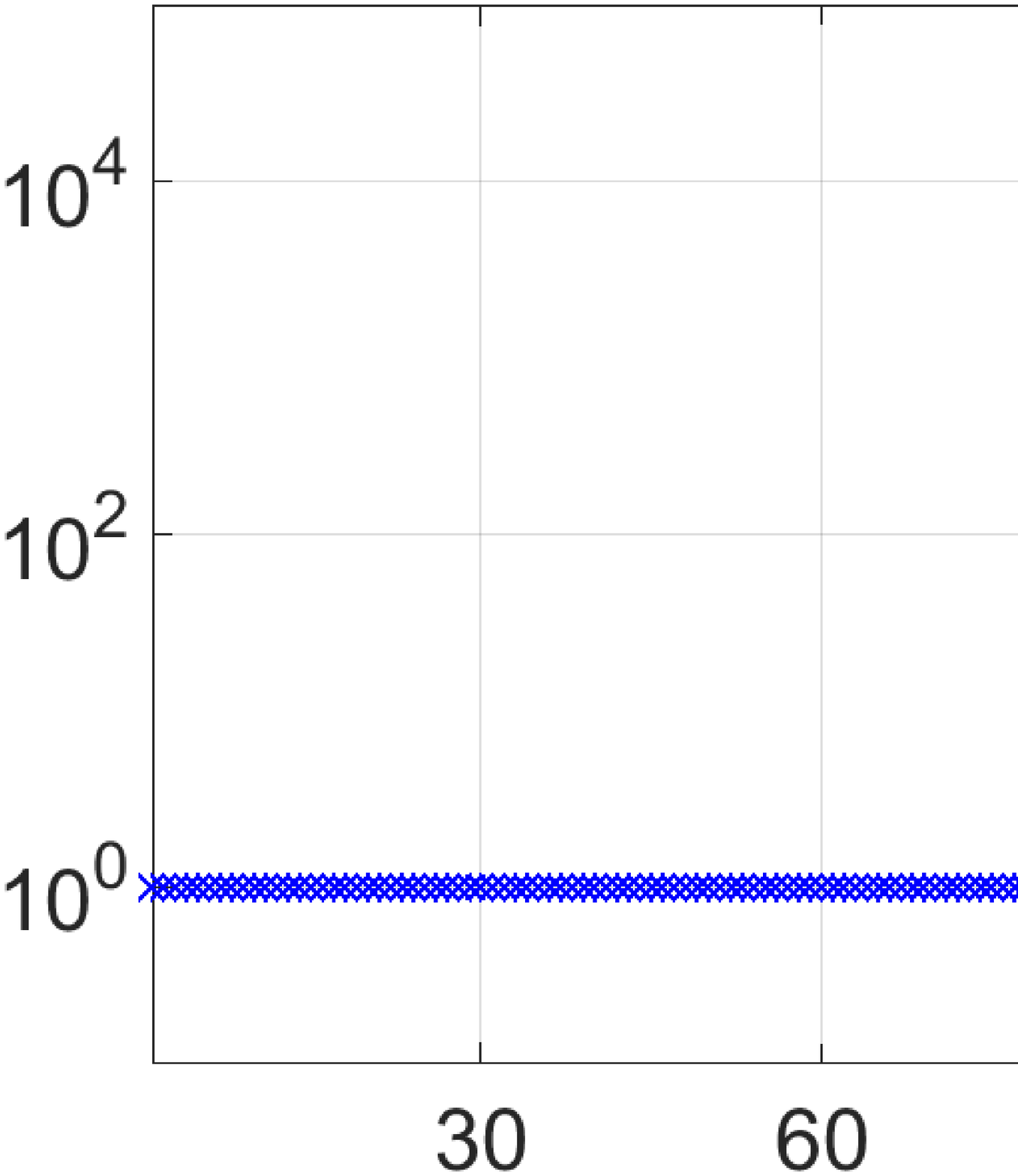} \\
	\end{tabular}
	\caption{Eigenvalues of $\widetilde{\mathcal{S}}^{-1}\mathcal{S}$ for $\beta=10^{-3}$ (left) and $\beta=10^{-6}$ (right).}
	\label{fig:paramrobust}
\end{figure}

Now we can hope that we will get good results if we can find an approximation for the action of
$\widetilde{\mathcal{S}}^{-1} = \mathcal{K}^{-1}\mathcal{Q}\mathcal{K}^{-1}$. This needs two inverses of the discrete Stokes operator
$\mathcal{K}$ and one multiply by $\mathcal{Q}$; the latter is obviously trivial. The approximation of the discrete Stokes operator requires
some extra care here, since we cannot simply use a well-known Stokes preconditioner such as \cite{silvester1993fast}. The issue that has 
been pointed out by Braess and Peisker \cite{braess1986numerical} is the following: if we have a good preconditioner $\widetilde{K}$ for a 
matrix $K$, then $\widetilde{K}^T\widetilde{K}$ is not necessarily a good preconditioner for $K^TK$. Indeed, we find that commonly used 
preconditioners for the Stokes operator fail in our situation. In their paper, Braess and Peisker also show sufficient conditions for a
squared preconditioner to work.

\begin{theorem}[Braess--Peisker conditions] \label{thm:braesspeisker}
	We consider the linear equation ${K\mathbf{x} = \mathbf{b}}$. Let $\widetilde{K}_j$ be a sequence of invertible matrices, such that 
	$\mathbf{x}^{(j)} \coloneqq \widetilde{K}_j^{-1} \mathbf{b}$ converges to the exact solution $\mathbf{x}$ in the sense that
	\begin{equation*}
		\normb{\mathbf{x}^{(j)} - \mathbf{x}}_2 \leq \eta_j \norm{\mathbf{x}}_2
	\end{equation*}
	with $\eta_j \rightarrow 0$. \\
	Then, for $j$ large enough, all $\mathbf{y}\neq\mathbf{0}$ satisfy
	\begin{equation*}
		(1 - \eta_j)^2 \leq
		\frac{\dotprod{KK^T\mathbf{y}}{\mathbf{y}}}{\dotprod{\widetilde{K}_j\widetilde{K}_j^T\mathbf{y}}{\mathbf{y}}} 
		\leq (1 + \eta_j)^2.
	\end{equation*}
\end{theorem}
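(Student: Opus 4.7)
The plan is to translate the hypothesis into a uniform operator-norm bound and then reduce the quadratic-form inequality to an $\ell_2$ norm estimate via a change of variable.

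First I would observe that the hypothesis applied to an arbitrary right-hand side $\mathbf{b} = K\mathbf{x}$ yields
\begin{equation*}
\normb{(\widetilde{K}_j^{-1}K - I)\mathbf{x}}_2 = \normb{\widetilde{K}_j^{-1}\mathbf{b} - K^{-1}\mathbf{b}}_2 \leq \eta_j \norm{\mathbf{x}}_2
\end{equation*}
for every $\mathbf{x}$, which is equivalent to the operator bound $\norm{\widetilde{K}_j^{-1}K - I}_2 \leq \eta_j$. Writing $E_j \coloneqq \widetilde{K}_j^{-1}K - I$, we therefore have $\widetilde{K}_j^{-1}K = I + E_j$ with $\norm{E_j}_2 \leq \eta_j$, and in particular $\norm{E_j^T}_2 \leq \eta_j$.

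Next I would rewrite the quadratic forms as squared norms: $\dotprod{KK^T\mathbf{y}}{\mathbf{y}} = \norm{K^T\mathbf{y}}_2^2$ and $\dotprod{\widetilde{K}_j\widetilde{K}_j^T\mathbf{y}}{\mathbf{y}} = \norm{\widetilde{K}_j^T\mathbf{y}}_2^2$, and perform the change of variable $\mathbf{z} \coloneqq \widetilde{K}_j^T\mathbf{y}$. Since $\widetilde{K}_j$ is invertible, the map $\mathbf{y} \mapsto \mathbf{z}$ is a bijection sending nonzero vectors to nonzero vectors, and
\begin{equation*}
K^T\mathbf{y} = K^T\widetilde{K}_j^{-T}\mathbf{z} = (\widetilde{K}_j^{-1}K)^T\mathbf{z} = (I + E_j^T)\mathbf{z}.
\end{equation*}
The ratio in question therefore equals $\norm{(I + E_j^T)\mathbf{z}}_2^2 / \norm{\mathbf{z}}_2^2$.

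Finally I would choose $j$ large enough so that $\eta_j < 1$ (which is possible since $\eta_j \to 0$) and apply the standard triangle inequality bounds
\begin{equation*}
(1-\eta_j)\norm{\mathbf{z}}_2 \leq \normb{(I+E_j^T)\mathbf{z}}_2 \leq (1+\eta_j)\norm{\mathbf{z}}_2,
\end{equation*}
squaring to obtain the claim. I do not foresee a substantial obstacle: the whole argument is a one-line operator-norm calculation once the hypothesis has been recast as a bound on $\widetilde{K}_j^{-1}K - I$; the only care required is to use the transpose of this perturbed identity, which is why the substitution $\mathbf{z} = \widetilde{K}_j^T\mathbf{y}$ (rather than $\mathbf{z} = \widetilde{K}_j\mathbf{y}$) is the natural one.
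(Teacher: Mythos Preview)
The paper does not supply its own proof of this theorem; it is stated as a result of Braess and Peisker and then used without argument, so there is no in-paper proof to compare against. Your argument is correct and is essentially the standard one: recast the hypothesis as an operator-norm bound $\norm{\widetilde{K}_j^{-1}K-I}_2\le\eta_j$, substitute $\mathbf{z}=\widetilde{K}_j^T\mathbf{y}$, and apply the triangle and reverse triangle inequalities before squaring. The one implicit step worth flagging is that you read the hypothesis as holding for \emph{every} right-hand side $\mathbf{b}$ (equivalently every $\mathbf{x}$), which is the intended reading since the conclusion quantifies over all $\mathbf{y}\neq\mathbf{0}$; without that uniformity the passage to $\norm{E_j}_2\le\eta_j$ would not be justified.
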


Note that the transposition of $\mathcal{K}$ is not strictly needed here since the discrete Stokes operator is symmetric; but in the 
Navier--Stokes case we will have to deal with a nonsymmetric $\mathcal{K}$ in the next chapter; therefore we include this case here.

Remember that we actually need to precondition $\mathcal{K}^T \mathcal{Q}^{-1} \mathcal{K}$. Rees and Wathen \cite{rees2011preconditioning}
show that the matrix $\mathcal{Q}^{-1}$ simply introduces a scaling to the Braess--Peisker result in the form
\begin{equation*}
	c_* (1 - \eta_j)^2 \leq
	\frac{\dotprod{KK^T\mathbf{x}}{\mathbf{x}}}{\dotprod{\widetilde{K}_j\widetilde{K}_j^T\mathbf{x}}{\mathbf{x}}} 
	C_* \leq (1 + \eta_j)^2,
\end{equation*}
\noindent with some constants $0 < c_* \leq 1$ and $1 \leq C_*$.

By Theorem~\ref{thm:braesspeisker} we have to approximate the Stokes equations by a contracting linear iteration. The Rees--Wathen 
preconditioner uses the inexact Uzawa \cite{uzawa1958iterative} iteration given by Algorithm~\ref{alg:uzawa} for a generic saddle point 
problem

\begin{equation*}
	\begin{bmatrix}
		\bm{A} & B^T \\
		B & O
	\end{bmatrix}
	\begin{pmatrix}
		\mathbf{v} \\
		\mathbf{p}
	\end{pmatrix}
	=
	\begin{pmatrix}
		\mathbf{f} \\
		\mathbf{g}
	\end{pmatrix},
\end{equation*}

\noindent with preconditioners $\widetilde{\bm{A}}$ for $\bm{A} $ and $\widetilde{S}$ for the Schur complement $S = B\bm{A}^{-1}B^T$. The 
convergence properties of the inexact Uzawa method have been studied in \cite{elman1994inexact,bramble1997analysis,zulehner2002analysis}. It 
can be shown to provide preconditioning for the Stokes operator independently of the grid size as long as the approximations 
$\widetilde{\bm{A}}$ and $\widetilde{S}$ are good enough, see \cite[Subsection~2.4]{rees2011preconditioning}. For the Laplacian operator
$\bm{A}$ a fixed number of multigrid cycles \cite{hackbusch1985multigrid,trottenberg2001multigrid} may be used; this is known to give a
spectrally equivalent preconditioner, that is if the action of a fixed number of multigrid cycles is given by a matrix $\widetilde{\bm{A}}$, 
then bounds $\delta$ and $\varDelta$ independent of the grid size $h$ exist such that ${0 < \delta \leq 
\lambda(\widetilde{\bm{A}}^{-1}\bm{A}) \leq \varDelta}$, see \cite[Section~2.5]{elman2014finite}. The Schur complement of the discrete Stokes
operator is spectrally equivalent to the pressure mass matrix $Q_p$, see \cite[Theorem~3.29]{elman2014finite}; hence a fixed number of
Chebyshev steps can be used here, as discussed in the previous subsection.

\begin{algorithm}[Inexact Uzawa] \label{alg:uzawa}
	\rm
	\begin{algorithmic}[0]
		\State Choose $\sigma, \, \tau > 0$
		\State Choose $\mathbf{v}^{(0)}$, $\mathbf{p}^{(0)}$
		\For{$k=0$ \textbf{until} convergence}
			\State Solve $\widetilde{\bm{A}}\bm{\updelta}\mathbf{v}^{(k)}
			= \mathbf{f} - \bm{A}\mathbf{v}^{(k)} - B^T \mathbf{p}^{(k)}$
			\State $\mathbf{v}^{(k+1)} = \mathbf{v}^{(k)} + \sigma\bm{\updelta}\mathbf{v}^{(k)}$
			\State Solve $\tfrac{1}{\tau}\widetilde{S}\bm{\updelta}\mathbf{p}^{(k)} = B\mathbf{v}^{(k+1)} - \mathbf{g}$
			\State $\mathbf{p}^{(k+1)} = \mathbf{p}^{(k)} + \tau\bm{\updelta}\mathbf{p}^{(k)}$
		\EndFor
	\end{algorithmic}
\end{algorithm}

\section{The Navier--Stokes boundary control problem} \label{sec:navierproblem}

We would like to extend our treatment to the more general case of Navier--Stokes control. The analogue to the Stokes problem is given by
\begin{equation} \label{eqn:navierproblem}
	\begin{split}
		\min_{\vec{v},\,p,\,\vec{u}}\,
		& \frac{1}{2} \normb{\vec{v} - \widehat{\vec{v}}}_{L^2(\varOmega)^2}^2
		+ \frac{\alpha}{2} \normb{p - \widehat{p}}_{L^2(\varOmega)}^2
		+ \frac{\beta}{2} \normb{\vec{u}}_{L^2(\partial \varOmega_{\text{in}})^2}^2 \\
		& \text{such that}
		\begin{mycases}
			- \nu \nabla^2 \vec{v} + \vec{v} \cdot \nabla \vec{v} + \nabla p & = \vec{0}
			\quad \text{in} \;\, \varOmega, \\
			\nabla \cdot \vec{v} & = 0 \quad \text{in} \;\, \varOmega, \\
			\vec{v} &= \vec{0} \quad \text{on} \;\,  \partial\varOmega_D, \\
			\nu \tfrac{\partial\vec{v}}{\partial n} - p \vec{n} & = \vec{u} \quad \text{on} \;\,  \partial\varOmega_{\text{in}}, \\
			\nu \tfrac{\partial\vec{v}}{\partial n} - p \vec{n} & = \vec{0} \quad \text{on} \;\,  \partial\varOmega_{\text{out}},
		\end{mycases}
	\end{split}
\end{equation}
\noindent where the only new terms are the viscosity parameter $\nu$ and the nonlinear convection term $\vec{v}\cdot\nabla\vec{v}$. A 
Navier---Stokes flow is usually characterized by the Reynolds number $\mathcal{R}\sim 1/\nu$; in fact, for our channel domain 
$\mathcal{R}=1/\nu$. To solve \eqref{eqn:navierproblem} we need to linearize the constraint, i.\,e. the convection term. This involves 
computing solutions $(\vec{v}_k,p_k)$ to a sequence of linearized problems starting from some initial guess\footnote{We will choose 
$(\vec{v}_0,p_0)$ to be the solution of the corresponding Stokes problem \eqref{eqn:stokesproblem}.} $(\vec{v}_0,p_0)$. The simplest way to 
do this is by a fixed point iteration where we replace the nonlinear convection term $\vec{v}\cdot\nabla\vec{v}$ by its linearized version 
$\vec{v}_h\cdot\nabla\vec{v}$ with $\vec{v}_h$ being the velocity solution from the previous iterate. This is referred to as the Picard 
linearization of the Navier--Stokes equations and the resulting linear PDE is called the Oseen equation. Hence to find a solution of 
\eqref{eqn:navierproblem} we solve a sequence of Oseen control problems
\begin{equation} \label{eqn:oseenproblem}
	\begin{split}
		\min_{\vec{v},\,p,\,\vec{u}}\,
		& \frac{1}{2} \normb{\vec{v} - \widehat{\vec{v}}}_{L^2(\varOmega)^2}^2
		+ \frac{\alpha}{2} \normb{p - \widehat{p}}_{L^2(\varOmega)}^2
		+ \frac{\beta}{2} \normb{\vec{u}}_{L^2(\partial \varOmega_{\text{in}})^2}^2 \\
		& \text{such that}
		\begin{mycases}
			\text{s.\,t.} \; - \nu \nabla^2 \vec{v} + \vec{v}_h \cdot \nabla \vec{v} + \nabla p & = \vec{0}
			\quad \text{in} \;\, \varOmega, \\
			\nabla \cdot \vec{v} & = 0 \quad \text{in} \;\, \varOmega, \\
			\vec{v} &= \vec{0} \quad \text{on} \;\,  \partial\varOmega_D, \\
			\nu \tfrac{\partial\vec{v}}{\partial n} - p \vec{n} & = \vec{u} \quad \text{on} \;\,  \partial\varOmega_{\text{in}}, \\
			\nu \tfrac{\partial\vec{v}}{\partial n} - p \vec{n} & = \vec{0} \quad \text{on} \;\,  \partial\varOmega_{\text{out}}.
		\end{mycases}
	\end{split}
\end{equation}

Pošta and Roubíček \cite{posta2007optimal} suggest augmenting the cost function of \eqref{eqn:oseenproblem} by the term 
$-\int_\varOmega(\vec{v}\cdot\nabla\vec{v}_h)\cdot\vec{\lambda}_h$, where $\vec{\lambda}_h$ denotes the continuous version of the Lagrange
multiplier $\bm{\uplambda}$ from the previous iteration\footnote{This is motivated by viewing the Picard iteration as an SQP
type iteration 
for \eqref{eqn:navierproblem}.}; this approach is used in \cite{pearson2015preconditioned}. Then convergence of a distributed control problem 
can be proved, see \cite{posta2007optimal} for details. The practical advantage lies in a reduced number of iterations for high 
Reynolds numbers. Our treatment is restricted to relatively low Reynolds numbers; we will find that the Picard iteration with the system 
\eqref{eqn:oseenproblem} gives satisfactory results in this case, thus we will not consider this augmentation.

The linear convection operator $\vec{v}_h\cdot\nabla\vec{v}$ is discretized in the two-dimensional finite element space by
$\bm{N} = \blkdiag(N,N)$ with $N=[\int_{\varOmega}\big(\vec{v}_h\cdot\nabla\varphi_j\big)\varphi_i]$. If we define the vector
convection-diffusion operator $\bm{F}=\nu\bm{A}+\bm{N}$, then the discretized Oseen equations are given by
\begin{equation} \label{eqn:discroseen}
	\begin{bmatrix}
		\bm{F} & B^T \\
		B & O
	\end{bmatrix}
	\begin{pmatrix}
		\mathbf{v} \\
		\mathbf{p}
	\end{pmatrix} =
	\begin{pmatrix}
		\widehat{\bm{Q}}\mathbf{u} \\
		\mathbf{0}
	\end{pmatrix}.
\end{equation}

The cost functional remains unchanged from \eqref{eqn:stokesproblem} to \eqref{eqn:oseenproblem}, hence the discrete cost function is also
given by \eqref{eqn:discrcost}. Applying the discretize-then-optimize approach\footnote{The alternative optimize-then-discretize method does 
not necessarily result in a symmetric matrix; see for example \cite{pearson2013fast} and \cite[Section~6.3]{rees2010preconditioning} for
the convection duffusion equation. We will not consider it here.} gives the Oseen KKT system
\begin{equation} \label{eqn:oseenkkt}
	\begin{bmatrix}
		\bm{Q}_{\vec{v}} & O & O & \bm{F}^T & B^T \\
		O & \alpha Q_p & O & B & O \\
		O & O & \beta\bm{Q}_{\vec{u}} & -\widehat{\bm{Q}}^T & O \\
		\bm{F} & B^T & -\widehat{\bm{Q}} & O & O \\
		B & O & O & O & O
	\end{bmatrix}
	\begin{pmatrix}
		\mathbf{v} \\
		\mathbf{p} \\
		\mathbf{u} \\
		\bm{\uplambda} \\
		\bm{\upmu}
	\end{pmatrix} =
	\begin{pmatrix}
		\mathbf{b} \\
		\alpha\mathbf{d} \\
		\mathbf{0} \\
		\mathbf{0} \\
		\mathbf{0} \\
	\end{pmatrix}.
\end{equation}
\noindent Note that here $\bm{F}\neq\bm{F}^T$ since the convection matrix $\bm{N}$ is nonsymmetric.

\section{Preconditioning for the Navier--Stokes problem} \label{sec:navierprecond}

Due to the similarity of the KKT system for the Oseen problem to the one for the Stokes problem \eqref{eqn:stokeskkt}, our preconditioner
will be based on the Rees--Wathen method discussed in the previous section. Indeed, the (1,1) blocks of both systems are identical, namely 
the block-diagonal matrix ${\blkdiag(\bm{Q}_{\vec{v}},Q_p,\bm{Q}_{\vec{u}})}$, so there is no additional work needed here.

The main difference is the operator $\bm{F}$ instead of $\bm{A}$, which is increasingly nonsymmetric for higher Reynolds numbers. If we
use the Schur complement approximation dropping the low-rank perturbation $\tfrac{1}{\beta}\mathcal{L}$, namely
\begin{equation*}
	\mathcal{S} \approx \widetilde{\mathcal{S}} =
	\underbrace{\begin{bmatrix}
		\bm{F} & B^T\\
		B & O
	\end{bmatrix}}_{\eqqcolon \mathcal{K}}
	\begin{bmatrix}
		\bm{Q}_{\vec{v}}^{-1} & O \\
		O & \tfrac{1}{\alpha} Q_p^{-1} \\
	\end{bmatrix}
	\begin{bmatrix}
		\bm{F}^T & B^T \\
		B & O \\
	\end{bmatrix},
\end{equation*}
\noindent we need linear approximations for $\mathcal{K}$ and $\mathcal{K}^T$ that satisfy the Braess--Peisker conditions 
in Theorem~\ref{thm:braesspeisker}. Note that we cannot use Krylov subspace methods such as GMRES \cite{saad1986gmres} since they are 
nonlinear and hence unsuitable as preconditioners for MINRES. A candidate for such is the Uzawa type iteration for nonsymmetric systems 
introduced by Bramble et al. \cite{bramble1999uzawa}. It may be written in the same way as Algorithm~\ref{alg:uzawa}, the subtle difference 
here is however that $\widetilde{\bm{A}}$ is not a preconditioner for $\bm{F}$ but for the symmetric part
$\bm{F}_S=\tfrac{1}{2}(\bm{F}+\bm{F}^T)$ with eigenvalue bounds $1 \leq \lambda(\widetilde{\bm{A}}^{-1}\bm{F}_S) \leq \varDelta$. Similarly, 
$\widetilde{S}$ is a preconditioner for the symmetric part of the Schur complement $S_S=B\bm{F}_S^{-1}B^T$  with eigenvalue bounds
$\gamma \leq \lambda(\widetilde{S}^{-1}S_S) \leq 1$. Note that for the given approximations, the factor $1$ in both inequalities can be achieved 
by a scaling. Then Algorithm ~\ref{alg:uzawa} will converge if $\delta$ and $\tau$ are small enough; for a rigorous statement see 
\cite[Theorem~3.1]{bramble1999uzawa}.

There exist
efficient solvers for the Navier---Stokes equations such as the pressure convection-diffusion preconditioner 
\cite{silvester2001efficient,kay2002preconditioner} and the least-squares commutator preconditioner 
\cite{elman1999preconditioning,elman2006block}. However, these preconditioners do not satisfy the Braess--Peisker conditions and we do not 
know how they can be symmetrized efficiently to embed them in a Uzawa type iteration, thus we will not use them here.

The properties of $\mathcal{K}$ depend on the discrete convection operator $\bm{N}$, and the lower the viscosity parameter $\nu$, the
more so. Therefore we will study some spectral properties of $\bm{N}$ in the next subsection, and then construct a preconditioner
taking them into account.

\subsection{Matrix properties} \label{subsec:matrix}

Due to the block-diagonal structure of the vector-convection operator $\bm{N}$ we can restrict our discussion to the scalar case without 
loss of generality.

We consider a continuous bilinear form
\begin{equation} \label{eqn:bilinear}
	c(u,v) \coloneqq \int_{\varOmega} \big( \vec{w} \cdot \nabla u \big) v,
\end{equation}
\noindent where $\vec{w}$ is a given wind with $\nabla\cdot\vec{w}=0$; this is no restriction because in the Oseen system the wind is the 
solution of a (Navier--)Stokes boundary value problem. The bilinear form  $c(\,\cdot,\cdot\,)$ is associated with the convection operator which 
is usually thought of as being skew-selfadjoint. However, in general there will be some selfadjoint perturbation. We want to find an 
explicit expression for it. Application of the divergence theorem to \eqref{eqn:bilinear} gives

\begin{equation*}
	\begin{split}
		c(u,v) = - \int_{\varOmega} (v \vec{w}) \cdot \nabla u
		& = - \int_{\varOmega} \nabla \cdot (v \vec{w}) \, u + \int_{\partial\varOmega_N} u v \, \vec{w} \cdot \vec{n} \\
		& = - \int_{\varOmega} \big( (v \nabla \cdot \vec{w}) u + (\vec{w} \cdot \nabla v) u \big)
		+ \int_{\partial\varOmega} u v \, \vec{w} \cdot \vec{n} \\
		& = - \int_{\varOmega} \big( \vec{w} \cdot \nabla v \big) u + \int_{\partial\varOmega_N} u v \, \vec{w} \cdot \vec{n},
	\end{split}
\end{equation*}
\noindent where we used the product rule and the fact that $\vec{w}$ is divergence-free. The last equality uses the assumption that
$\vec{w}\cdot\vec{n} = 0$ on $\partial\varOmega_D$, i.\,e. there is no in- or outflow on the Dirichlet boundary. This is consistent with 
our channel domain where the Dirichlet boundary represents the channel walls. Thus, the selfadjoint part of
$c(\,\cdot,\cdot\,)$ is given by
\begin{equation} \label{eqn:selfadjperturb}
	h(u,v) \coloneqq \frac{1}{2} \big( c(u,v) + c(v,u) \big) = \frac{1}{2} \int_{\partial\varOmega_N} u v \, \vec{w} \cdot \vec{n}.
\end{equation}

It is important to understand the consequences of \eqref{eqn:selfadjperturb} in our finite-element framework. Consider the basis $\{\varphi_j\}$ for the finite element space. Then we can express 
the symmetric part of the scalar convection matrix $N_S = \tfrac{1}{2}(N+N^T)$ in the form

\begin{equation*}
	N_S = \big[ n_{ij}^S \big],
	\qquad n_{ij}^S = \int_{\partial\varOmega_N} \varphi_i \varphi_j \, \vec{w} \cdot \vec{n},
\end{equation*}

\noindent and the selfadjoint part of $c(\cdot,\cdot)$, applied to a function $v_h\in\Span\{\varphi_j\}$, becomes

\begin{equation} \label{eqn:symmperturb}
	h(v_h,v_h) = \frac{1}{2} \int_{\partial\varOmega_N} v_h^2 \, \vec{w} \cdot \vec{n}
	= \frac{1}{2} \sum_{i=1}^n \sum_{j=1}^n 
	\bm{v}_i \bm{v}_j \int_{\partial\varOmega_N} \varphi_i \varphi_j \, \vec{w} \cdot \vec{n}.
	= \frac{1}{2} \mathbf{v}^T N_S \mathbf{v}.
\end{equation}

\noindent In the discrete bilinear form $\mathbf{v}^T N \mathbf{v}$ the contribution of $\mathbf{v}^T N_S \mathbf{v}$ will clearly
dominate if the coordinate vector $\mathbf{v}$ (and hence the associated function $v_h$) is relatively large (in modulus) on the Neumann
boundary and small everywhere else. From \eqref{eqn:symmperturb} it is clear that the sign of the contribution of
$\mathbf{v}^T N_S \mathbf{v}$ is determined by the sign of $\vec{w}\cdot\vec{n}$. Recall that $\vec{n}$ is the outward normal vector on 
$\partial\varOmega$, this motivates the definition of

\begin{align*}
	\text{the outflow boundary} \quad
	\partial\varOmega_+ & \coloneqq \big\{ x \in \partial\varOmega_N \, \big| \, \vec{w} \cdot \vec{n} > 0 \big\}, \\
	\text{the characteristic boundary} \quad
	\partial\varOmega_0 & \coloneqq \big\{ x \in \partial\varOmega_N \, \big| \, \vec{w} \cdot \vec{n} = 0 \big\}, \\
	\text{and the inflow boundary} \quad
	\partial\varOmega_- & \coloneqq \big\{ x \in \partial\varOmega_N \, \big| \, \vec{w} \cdot \vec{n} < 0 \big\}.
\end{align*}

\begin{figure}[t!]
	\centering
	\includegraphics[scale=0.2]{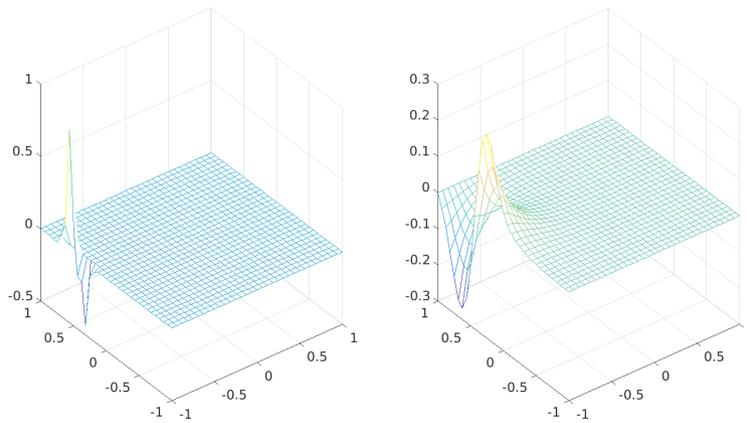}
	\caption{Eigenvectors of $N_S$ corresponding to $\lambda = -0.0314$ (left) and of ${F_S = N_S + \tfrac{\nu}{2}A}$ 
	corresponding to ${\lambda = -0.0020}$ (right) with $\nu = 1/10$.} 
	\label{fig:negeigenvalues}
\end{figure}

\noindent Note that these definitions do not necessarily coincide with the inflow $\partial\varOmega_{\text{in}}$ and the outflow
$\partial\varOmega_{\text{in}}$ of the channel; the latter are concepts based only on the geometry of the channel, while the former depend
on the actual flow profile. The quadratic form for $\mathbf{v}$ in \eqref{eqn:symmperturb} is a scaled Rayleigh quotient (especially, it has 
the same sign as the Rayleigh quotient), and thus, it is related to the eigenvalues of $N_S$. Clearly, the contribution of 
$\partial\varOmega_+$ is related to positive eigenvalues of $N_S$, and the contribution of $\partial\varOmega_-$ is related to negative 
eigenvalues.

We will concentrate on $\partial\varOmega_-$ and the associated negative eigenvalues of $N_S$. In our control problem the wind is the solution
in the previous iterate $\vec{v}_h$. Since we are controlling the inflow of the channel, we may assume that $\vec{v}_h$ is close enough to
the desired flow profile $\widehat{\vec{v}}$ such that ${\vec{v}_h\cdot\vec{n}<0}$ if and only if ${\widehat{\vec{v}}\cdot\vec{n}<0}$.
Thus, it is sufficient to consider the inflow boundary with respect to $\widehat{\vec{v}}$, which is known from \eqref{eqn:vhat}. Hence
the inflow boundary is $\partial\varOmega_- = \{-1\}\times(0,1)$.

This issue is illustrated in Figure~\ref{fig:negeigenvalues}. The eigenvector of $N_S$ corresponding to a negative eigenvalue lives mostly on the
inflow boundary; the eigenvector of the scalar convection-diffusion operator ${F_S \coloneqq N_S + \tfrac{\nu}{2}A}$ tends to
have greater components in the interior of $\varOmega$ but shows similar asymptotic behaviour. In general, for decreasing $\nu$ we may
expect that the eigenvalues and eigenvectors of $F_S$ converge to those of $N_S$.

\subsection{Permutational preconditioner} \label{subsec:permute}

As discussed in the introduction to this section, the Bramble--Pasciak--Vassilev Uzawa type iteration is guaranteed to converge only if the 
symmetric part of the (1,1) block of the saddle point system is positive definite. As we have seen in the previous subsection, this is in 
general not the case for the discretized Oseen operator in our boundary control problem.

We have also seen both a theoretical explanation and numerical evidence that the negative eigenvalues of the symmetric part of the
discrete convection-diffusion operator $F_S$ are associated with modes that live mostly on the inflow boundary and are close to zero 
everywhere else. Let $v_h$ be such an eigenmode of $F_S$. It has a representation in the finite element basis in the form
\begin{equation*}
	v_h = \sum_{j=1}^n \bm{v}_j \varphi_j.
\end{equation*}
\noindent Since we are employing Lagrange elements, each of the basis functions $\varphi_j$ is equal to $1$ at one node and equal to $0$ at the others. This means that
the coefficients $\bm{v}_j$ corresponding to basis functions that are zero on the inflow boundary will be relatively small in modulus;
thus, in the matrix-vector product $F_S\mathbf{v}$ those columns of $F_S$ will dominate which correspond to the inflow nodes. On the other
hand, the matrix $F$ is a weighted sum of the discrete Laplacian $A$ and the convection operator $N$; the former is well-known to be
positive-semidefinite, the latter is skew-selfadjoint with the exception of the boundary contribution \eqref{eqn:selfadjperturb} discussed 
in the previous subsection. Thus, if we remove the columns (and the corresponding rows, to retain symmetry) from $F_S$ corresponding to the 
inflow components, we will eliminate most of the contribution of the negative modes and might expect the resulting matrix to be positive 
definite.

We explore this issue for the viscosity parameter $\nu=1/20$ and grid size $h=10^{-5}$ in Table~\ref{tab:deflevs}. The matrices $F_S'$, 
$F_S''$ and $F_S^{(4)}$ represent $F_S$ with every inflow node, every second inflow node and every fourth inflow node eliminated, respectively, in the way 
described above. Clearly, in this case it is sufficient to eliminate every fourth node to get a positive definite matrix, however the 
condition number is increased in this case. Presumably, the elimination of a higher number of inflow nodes eliminates not only negative 
eigenvalues but also some positive ones that are close to zero, giving a better condition number.

\begin{table}[t!]
	\centering
  	\caption{Eigenvalue structures and condition numbers of different node elimination schemes.}
  	\label{tab:deflevs}
  	\begin{tabular}{|c||c|c|c|c|}
  		\hline
  		 & $F_S$ & $F_S'$ & $F_S''$ & $F_S^{(4)}$ \\ \hline\hline
  		\#(negative eigenvalues) & $2$ & $0$ & $0$ & $0$ \\ \hline
  		$\kappa$ & $2.5261\cdot 10^3$ & $1.8061\cdot 10^3$ & $1.8178\cdot 10^3$ & $4.9132\cdot 10^3$ \\ \hline
  	\end{tabular}
\end{table}

When eliminating the negative eigenvalues, it is desirable to preserve as much of the structure of $F_S$ as possible, i.\,e. we want the
positive eigenvalues to remain largely unchanged. Figure~\ref{fig:deflevs} shows evidence that this is, indeed, the case. The eigenvalues
of $F_S$ and $F_S'$ are mostly indiscernible, we only observe a small perturbation for the smallest ones; this is consistent with the
theory, as the smallest eigenvalues of $F_S$ are presumably the ones most influenced by the boundary behaviour of the convection operator.
We may expect even better behaviour if the number of deleted nodes is chosen to be smaller.

\begin{figure}[t!]
	\centering
	\begin{tabular} {cc}
		\includegraphics[scale=0.105]{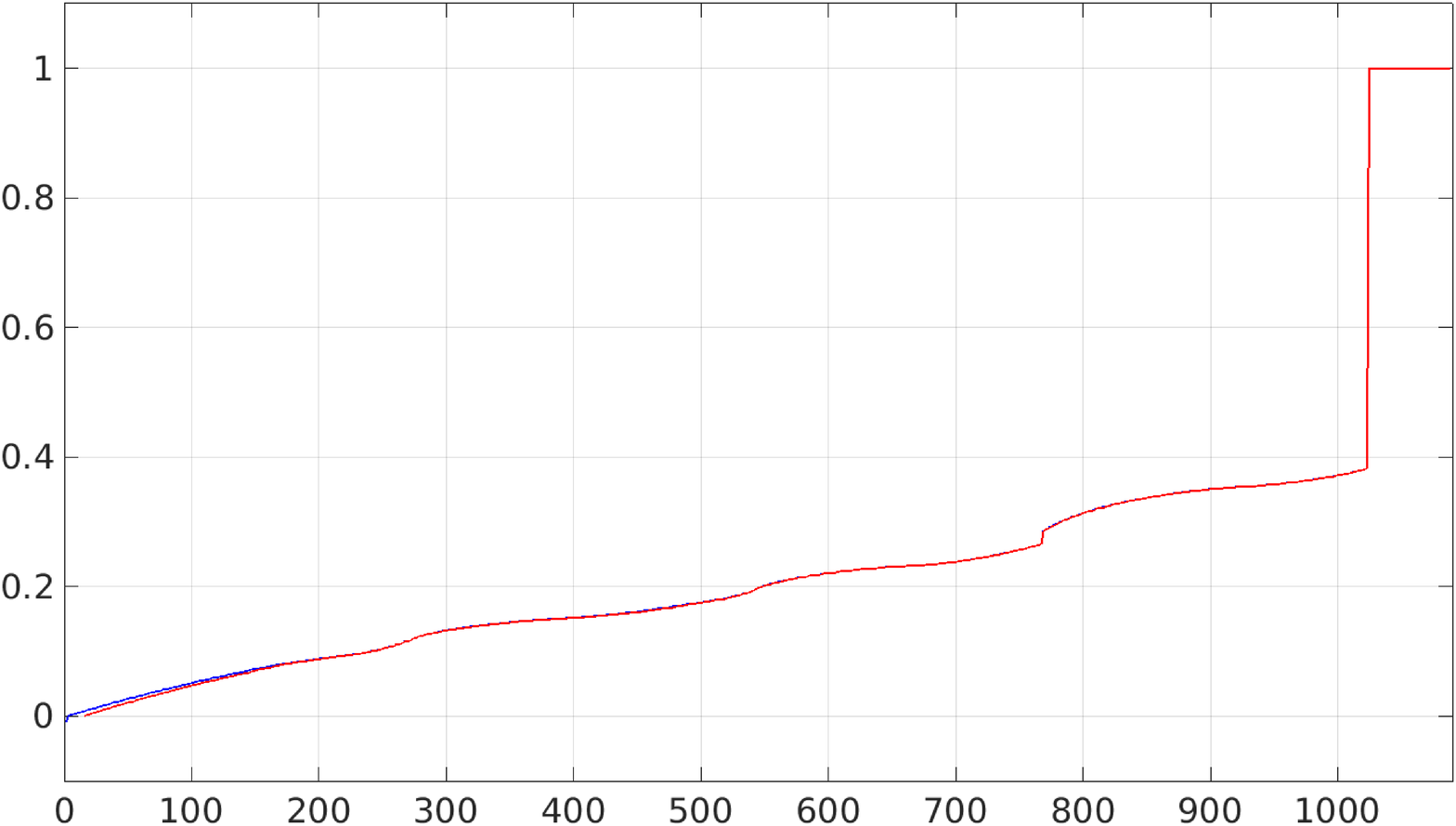} &
		\includegraphics[scale=0.105]{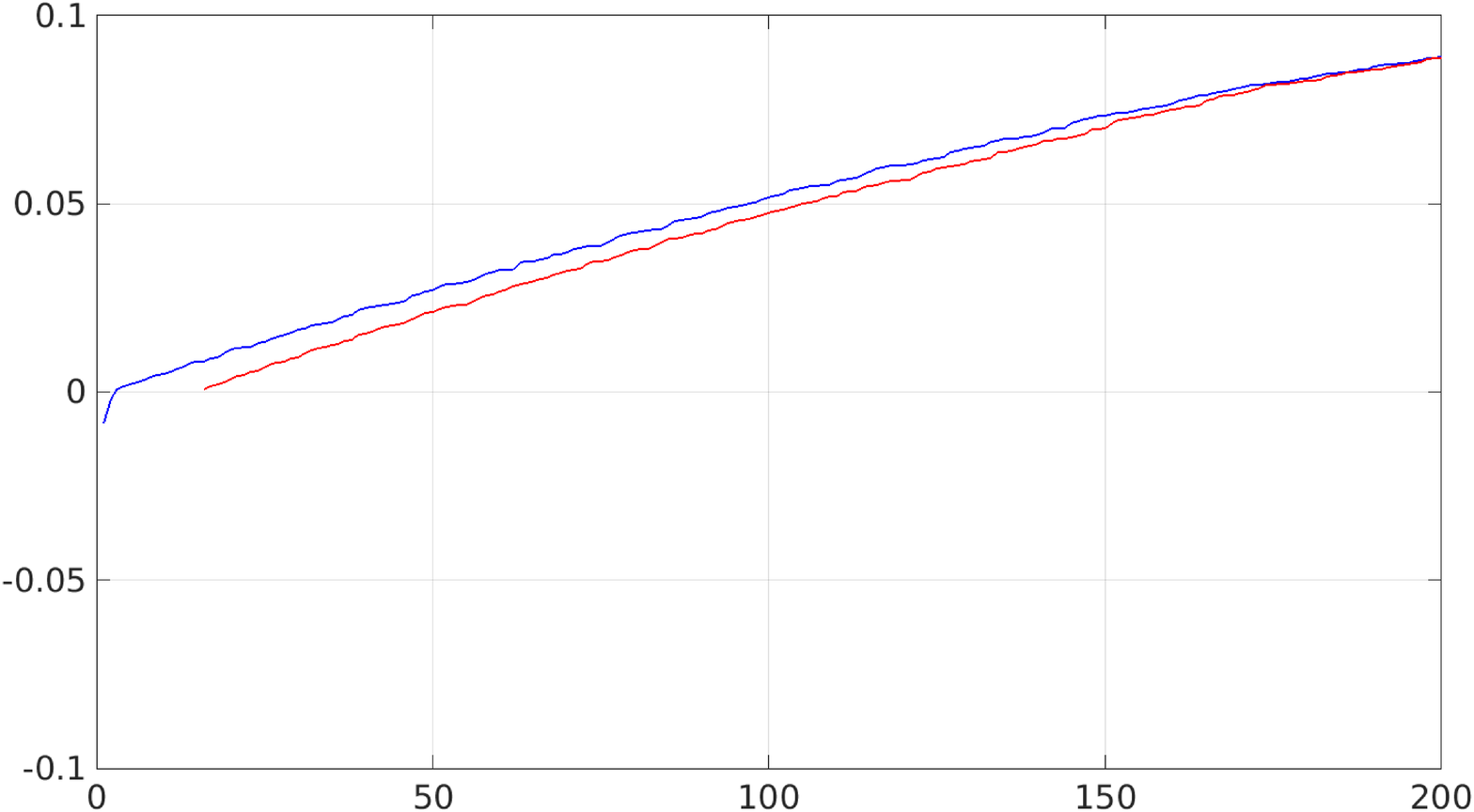} \\
	\end{tabular}
	\caption{Eigenvalues of $F_S$ (blue) and $F_S'$ (red) (plotted as continuous spectra), with a zoom in the lower left corner.}
	\label{fig:deflevs}
\end{figure}

Now we cannot simply delete rows and columns from the discrete Oseen operator as this would give us a different problem with a different
solution. Instead, we need to build a preconditioner that works around this issue. The idea of removing certain eigenspaces of a matrix
has been studied in the literature; a common technique is known as deflation. It is based on projecting the eigenspaces of the
system matrix into a space where the ``undesired'' eigenvalues are equal to zero---see, for example \cite{frank2001construction} for 
details. However, all these methods produce a singular system matrix, which is not a problem for Krylov subspace methods such as CG---they 
will still converge as long as the problem is consistent---but renders usual stationary iterations used in preconditioning infeasible. 
Therefore, deflation ideas cannot be used in our preconditioner in the usual way.

Instead, we will use an idea based on permutation. The negative eigenvalues of $F_S$ pose a problem inside the Uzawa type iteration, but if
we can move them to another part of our preconditioner, we might use the Uzawa type iteration for the remaining positive definite problem
and take care of the negative eigenvalues in some other way. We take the following approach: we change the order of rows and columns in
the system matrix of the discrete Oseen optimality conditions. If $\vec{\varphi}_j$ is a basis function which is equal to one at an inflow 
node we would like to remove it, so $\bm{a}_{jj}$ is moved to the (3,3) block of the KKT matrix. If the original system \eqref{eqn:oseenkkt} is given by
$\mathcal{A}\mathbf{x} = \mathbf{c}$, this can be thought of as solving the system
\begin{equation} \label{eqn:permute}
	\mathcal{P} \mathcal{A} \mathcal{P}^T \mathbf{y} = \mathcal{P} \mathbf{c}
\end{equation}
\noindent with a permutation matrix $\mathcal{P}$. The system \eqref{eqn:permute} is clearly symmetric and equivalent to the original
problem with $\mathbf{x} = \mathcal{P}^T\mathbf{y}$. This permutation of $\mathcal{A}$ is illustrated in Figure~\ref{fig:spy}.

\begin{figure}[t]
	\centering
	\includegraphics[scale=0.25]{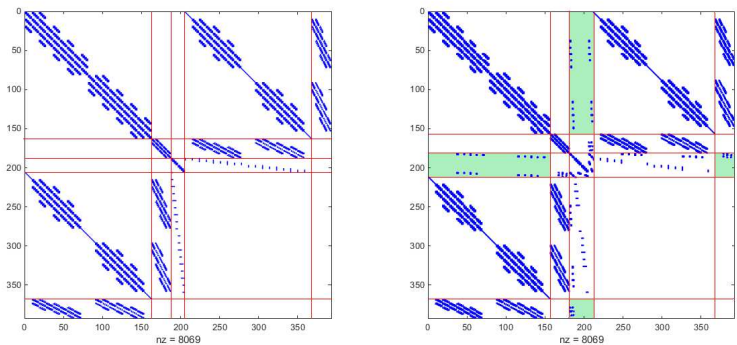}
	\caption{MATLAB $\mathtt{spy}$ plots of the Oseen KKT matrix without (left) and with (right) permutation. The blocks with
	extra fill introduced by the permutation are marked.}
	\label{fig:spy}
\end{figure}

Aside from augmenting the (3,3) block of the Oseen system, the permutation also changes the sprasity structure: it introduces some fill
in blocks that are equal to zero in the original system. We do not know how to deal with these efficiently, therefore in the computational 
preconditioner they are set to zero. This can be justified heuristically from two points of view. First, this is a variation of the idea 
that incomplete factorizations are based on: the preconditioner is enforced to have the same sparsity structure as the system matrix; the use of
incomplete factorizations is justified by both theoretical and numerical results. Second, the fill created by the permutation can be seen as 
a low-rank perturbation of a permuted system without fill; Theorem~\ref{thm:lowrank} suggests that we do not lose much information about
almost all eigenvalues by dropping it. Therefore, the spectrum of the system matrix remains largely unchanged, and we can expect a small number of additional iterations in the outer MINRES method.

Now, the Schur complement of the permuted system \eqref{eqn:permute} can be approximated in the same way as discussed in the previous
section: we  approximate the inverses of the Oseen operator and its transpose by inexact Uzawa type iterations and multiply by a
block-diagonal mass matrix, as discussed in Subsection~\ref{subsec:schur}. Since the permutation is designed to remove the nodes
associated with negative eigenvalues of $\bm{F}_S$, as discussed above, we are left with a positive-definite system and are guaranteed
to get convergence of the inner Uzawa type iteration.

The approximation of the block-diagonal upper-left block requires more care here, since the diagonal blocks are all not simple mass
matrices any more. While the (2,2) block, which is the pressure mass matrix, remains unchanged, by construction, the (3,3) block is
now much more complicated than just a simple mass matrix; recall that we use it to collect the boundary information corresponding to the
inflow, which we removed from the other blocks. We do not know enough about the structure of this matrix to find a good iterative preconditioner. However, especially for fine
grids, this will be a relatively small matrix, compared to the total number of degrees of freedom. On a two-dimensional domain, the number
of grid points (and thus, the dimension of the Oseen KKT matrix) is of order $\mathcal{O}(h^{-2})$. On the other hand the (3,3) block
only contains entries related to boundary nodes; their number is of order $\mathcal{O}(h^{-1})$. On a three-dimensional domain we would get
analogous behaviour with $\mathcal{O}(h^{-3})$ and $\mathcal{O}(h^{-2})$, respectively. This is illustrated in Table~\ref{tab:33block}. 

\begin{table}[t!]
	\centering
	\caption{Dimensions $N$ of the discrete Oseen KKT system and $n$ of the $(3,3)$-block of the permutational preconditioner with
	every second inflow node removed.}
  	\label{tab:33block}
  	\begin{tabular}{|c||c|c|c|c|c|c|}
    	\hline
    	$l$  & $2$ & $3$  & $4$   & $5$      & $6$  & $7$   \\ \hline\hline
    	$N$     & $128$    & $392$     & $1,352$    & $5,000$       & $19,208$   & $75,272$ \\ \hline    	
    	$n$   & $14$    & $26$     & $50$    & $98$       & $194$   & $386$ \\ \hline
  	\end{tabular}
\end{table}

\begin{figure}[b!]
	\centering
	\begin{tikzpicture}
		\node[draw, circle, very thick, red] (1) at (0,0)     {$1$};
		\node[draw, circle, very thick, red] (2) at (1.5,0)   {$2$};
		\node[draw, circle, very thick, red] (3) at (3,0)     {$3$};
		\node[draw, circle, very thick, red] (4) at (0,1.5)   {$4$};
		\node[draw, circle, very thick, red] (5) at (1.5,1.5) {$5$};
		\node[draw, circle, very thick, red] (6) at (3,1.5)   {$6$};
		\node[draw, circle, very thick, red] (7) at (0,3)     {$7$};
		\node[draw, circle, very thick, red] (8) at (1.5,3)   {$8$};
		\node[draw, circle, very thick, red] (9) at (3,3)     {$9$};
		\draw[very thick] (1) edge (2);
		\draw[very thick] (2) edge (3);
		\draw[very thick] (3) edge (6);
		\draw[very thick] (6) edge (9);
		\draw[very thick] (9) edge (8);
		\draw[very thick] (8) edge (7);
		\draw[very thick] (7) edge (4);
		\draw[very thick] (4) edge (1);
		\end{tikzpicture}
	\caption{Representation of $\pmb{Q}_1$ (left) and $\pmb{Q}_2$ (right) elements.} \label{fig:q2grid}
\end{figure}
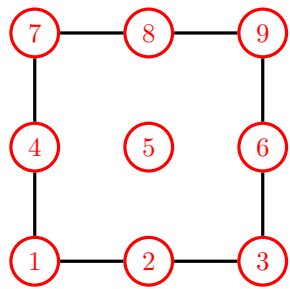

Note that the (3,3) block inherits the saddle point structure of the KKT matrix. Therefore, it is indefinite and cannot be used as a
preconditioner for MINRES. Instead, we use the block-diagonal preconditioner \eqref{eqn:bd}, which is also cheap due to the small dimension.

Now we consider the (1,1) block of the KKT system, namely the velocity mass matrix $\bm{Q}_{\vec{v}}$. Due to the block-diagonal
structure, it is sufficient to consider a scalar mass matrix $Q_v$. Recall that the permutation is constructed such that the contribution
of finite element basis functions on the inflow boundary is removed. We would like to precondition this modified mass matrix as usual
by a fixed number of Chebyshev semi-iterative steps. For this, we need explicit eigenvalue bounds of $D^{-1}Q$, where Q is the modified
mass matrix and $D=\diag(Q)$. This $Q$ is just a Galerkin mass matrix with the action of certain basis functions removed, thus the
eigenvalue results from \cite{wathen1987realistic} discussed earlier still apply here.

Especially, to get lower and upper bounds for the eigenvalues of $D^{-1}Q$, it is sufficient to compute the minimal and maximal eigenvalues
of $D_e^{-1}Q_e$ for all element mass matrices $Q_e$ and $D_e=\diag(Q_e)$. In each of the elements we remove the action of basis functions
that are equal to one at one edge node and equal to zero at all the others. This is equivalent to removing the row and column associated
with this node from a standard element matrix $Q_e$. We use a $\pmb{Q}_2$ approximation for the velocity space, represented in 
Figure~\ref{fig:q2grid}. A standard element mass matrix is a $9 \times 9$ matrix, since we have $9$ nodes in each element. We remove
some nodes located on one boundary, say the left-hand one. Then we need to consider index sets of the form $J \in 2^{\{1,\,4,\,7\}}$. We
get modified element mass matrices $Q_e^J$ by deleting the rows and columns of $Q_e$ with the indices in $J$. By the symmetry of the nodes 
at $1$ and $7$ we can restrict ourselves to the cases shown in Table~\ref{tab:permutationmass}. Clearly, the eigenvalues bounds from
Table~\ref{tab:massbounds} remain unchanged and we can apply the usual Chebyshev preconditioner.

\begin{table}[t]
	\centering
	\caption{Minimal and maximal eigenvalues of $(D_e^J)^{-1}Q_e^J$ for element mass matrices without nodes on the left-hand edge.}
  	\label{tab:permutationmass}
  	\begin{tabular}{|c||c|c|c|c|c|c|}
    	\hline
    	$J$                  & $\varnothing$ & $\{1\}$   & $\{4\}$    & $\{1,\,4\}$  & $\{1,\,7\}$  & $\{1,\,5,\,7\}$   \\ \hline\hline
    	$\lambda_{\min}$     & $0.2500$      & $0.3125$  & $0.3125$   & $0.3506$     & $0.3506$     & $0.3750$          \\ \hline    	
    	$\lambda_{\max}$     & $1.5625$      & $1.5625$  & $1.5625$   & $1.5625$     & $1.5625$     & $1.5625$          \\ \hline
  	\end{tabular}
\end{table}

\section{Numerical results} \label{sec:numres}

We present some numerical results for the problems \eqref{eqn:stokesproblem} and \eqref{eqn:navierproblem} using {MINRES} with the
Rees--Wathen type preconditioners discussed in the previous sections. All results were  obtained with MATLAB and the IFISS package
\cite{elman2007ifiss,elman2014ifiss}. The convergence condition for both MINRES and the Picard outer iteration is a residual reduction
by a factor of $10^{-6}$.

\subsection{Stokes control} \label{subsec:stokes}

We apply the Rees--Wathen type preconditioner from Section~\ref{sec:stokesprecond} with 5 Uzawa iterations as a Braess--Peisker 
approximation for the Stokes operator. The Laplacian is approximated by 5 AMG V-Cycles of the $\mathtt{HSL\_MI20}$ solver 
\cite{boyle2010hslmi20} which uses the Ruge--Stüben heuristics \cite{ruge1987algebraic}. The mass matrices are approximated with 20
steps of Chebyshev semi-iteration based on a damped Jacobi method. 

Typical solutions of the test problem are shown in Figure~\ref{fig:stokessolutions}. It is clearly seen how the regularization parameter 
$\beta$ influences the solution: for a small $\beta$ the optimal solution is closer to the desired velocity $\widehat{\vec{v}}$. For a small 
$\beta$ the action of the control is penalized less, and so we can influence the solution more by applying the optimal control. However, as 
we can see in Figure~\ref{fig:stokescontrol}, this effect appears to be bounded: $\mathbf{u}^T \bm{Q}_{\vec{u}} \mathbf{u}$, which is the 
discrete approximation of the control penalty function $\norm{\vec{u}}_{L^2(\partial\varOmega)}^2$, grows as $\beta$ decreases. However, the 
relatively small increase in the penalty function from $\beta = 10^{-5}$ to $\beta = 10^{-6}$ indicates that it may be bounded above by some 
constant. In the setting of the control problem, this would mean that there is a solution of the Stokes equations on this domain which 
minimizes the cost function with $\beta = 0$, possibly with a non-continuous control as the inflow boundary condition.

\begin{figure}[!]
	\centering
	\includegraphics[scale=0.2]{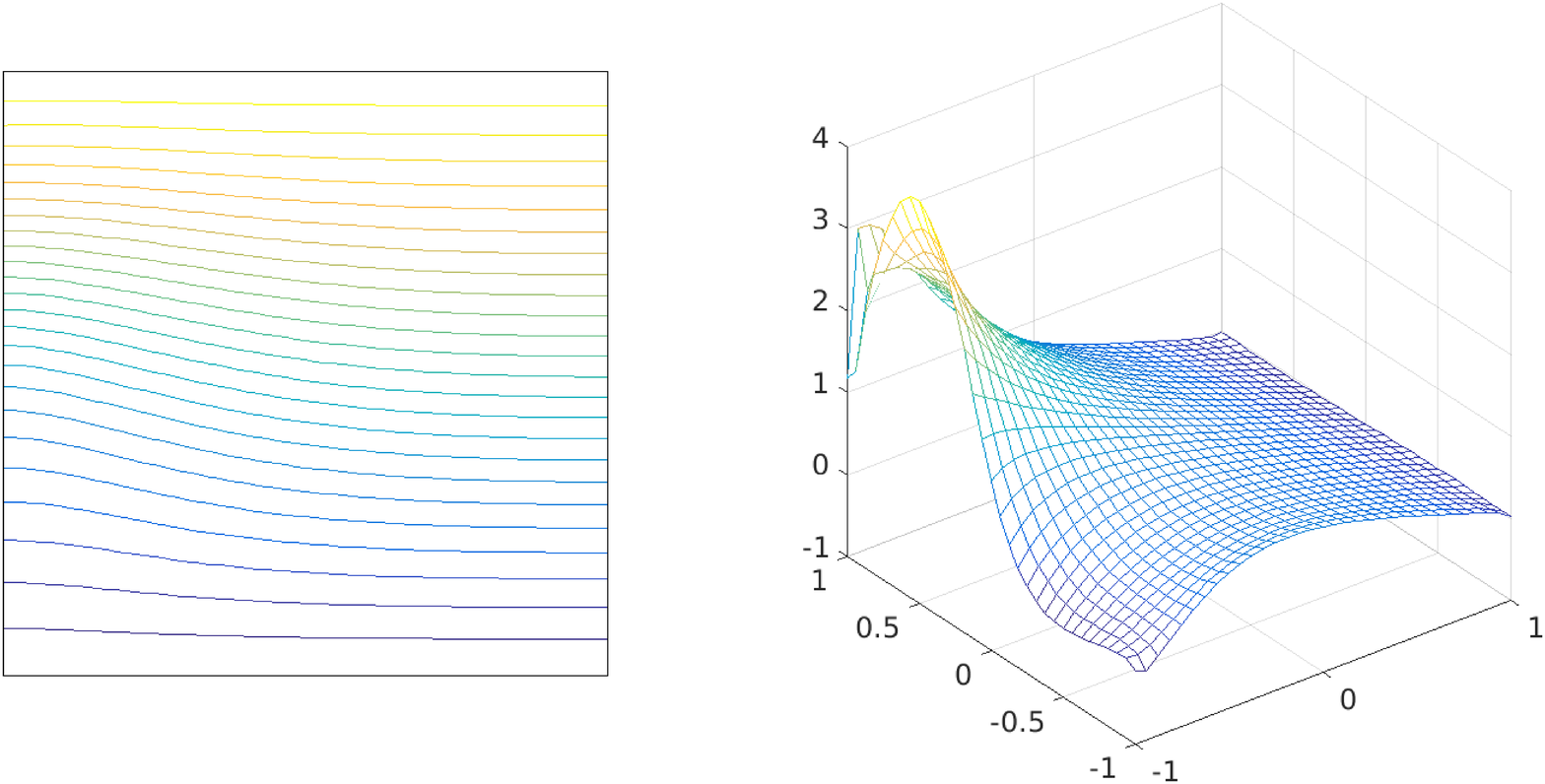} \\
	\includegraphics[scale=0.2]{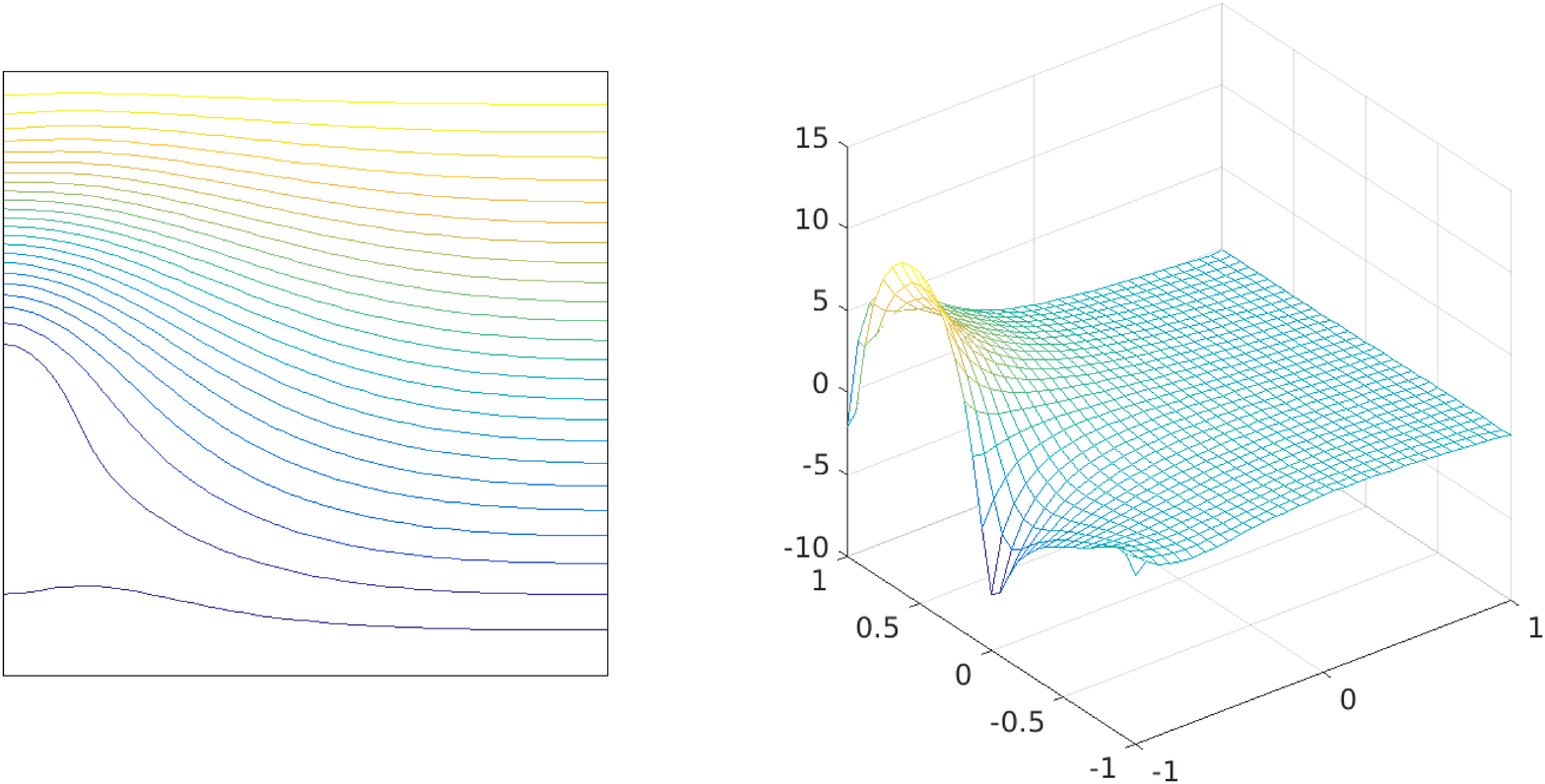}
	\caption{Solution of the test problem with $\alpha = 10^{-3}$ and $\beta = 10^{-2}$ (top) and $\beta=10^{-5}$ (bottom).} 
	\label{fig:stokessolutions}
\end{figure}

\begin{figure}[h!]
	\centering
	\begin{tabular} {cc}
		\includegraphics[scale=0.105]{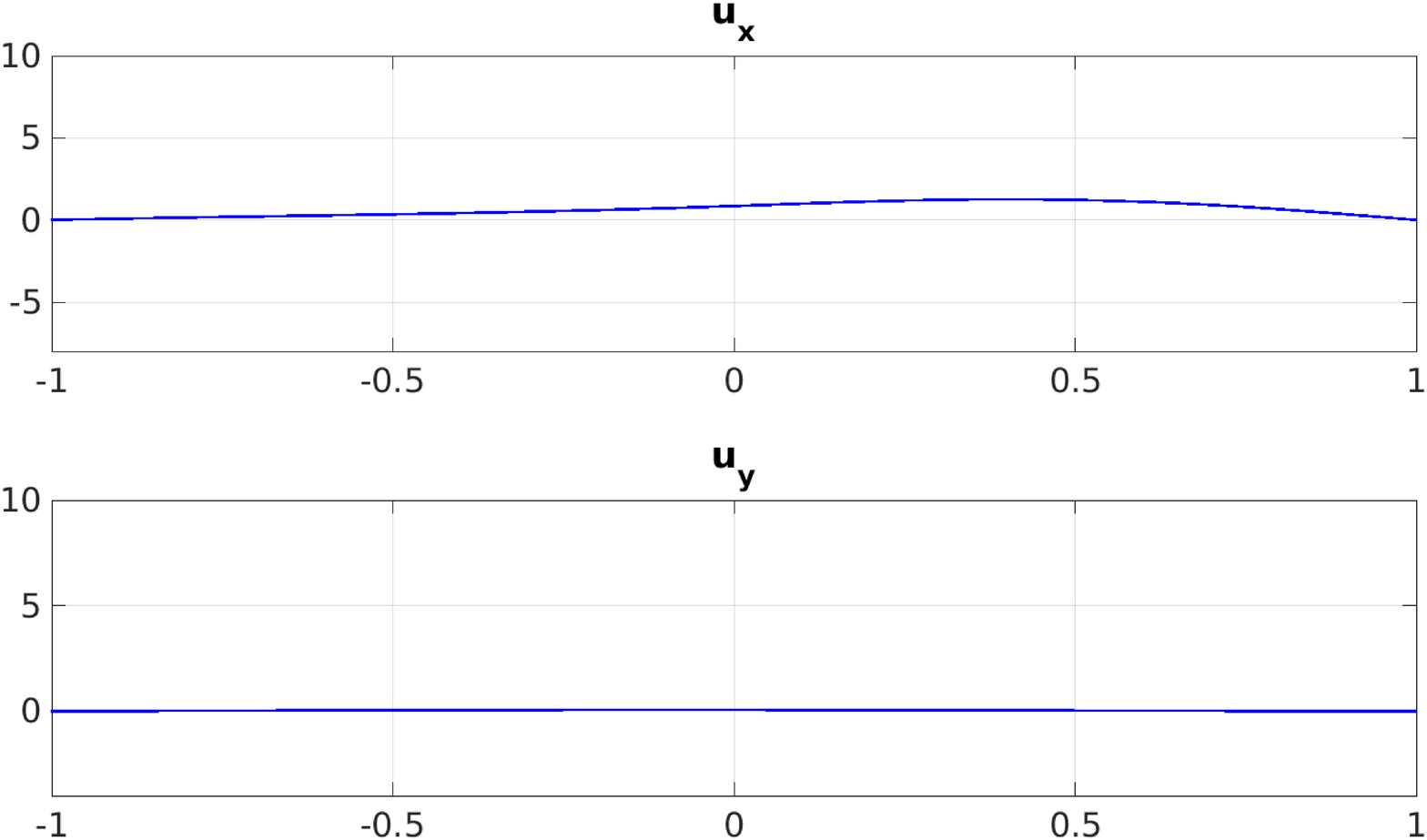} &
		\includegraphics[scale=0.105]{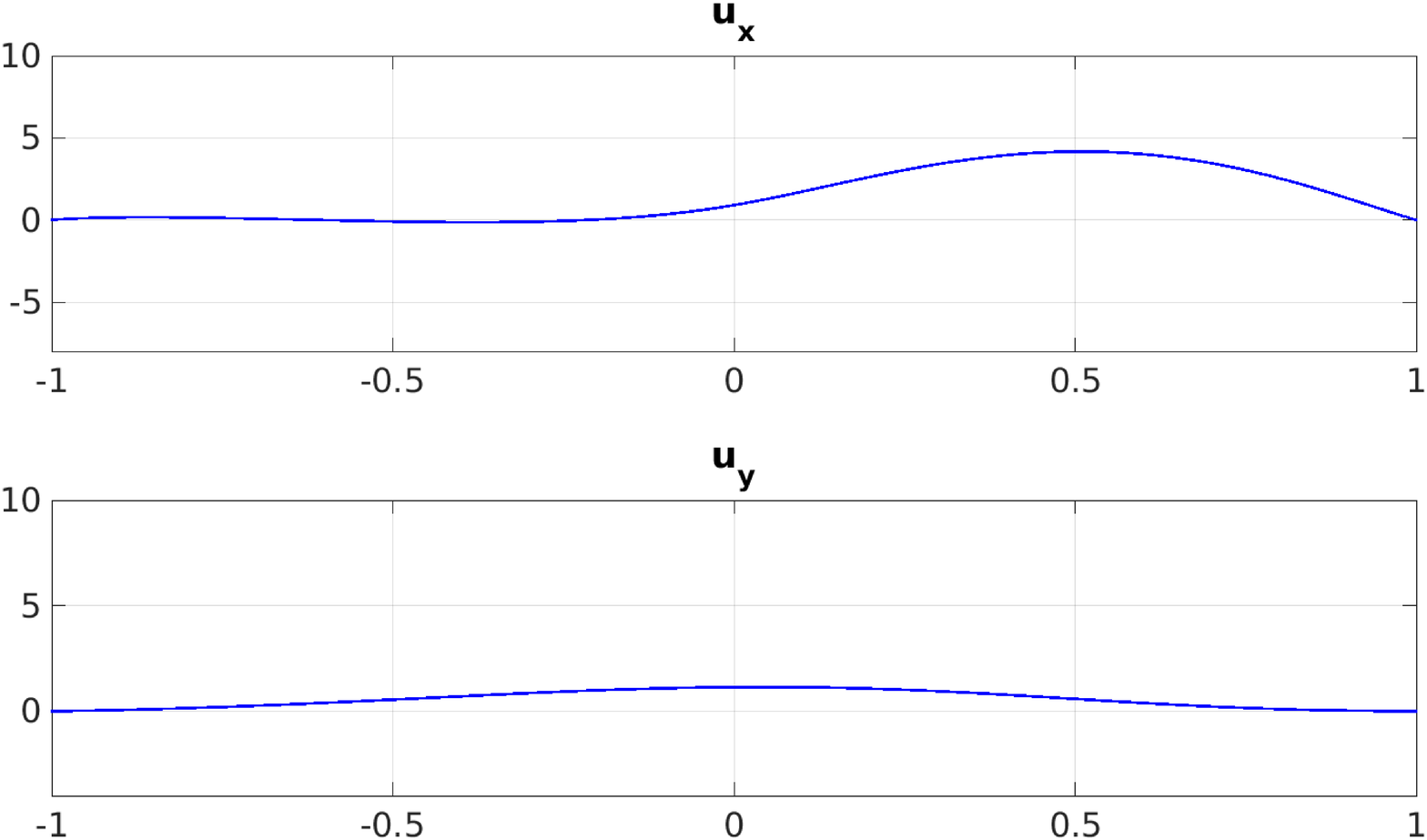} \\
		$\beta=10^{-1}$, $\mathbf{u}^T \bm{Q}_{\vec{u}} \mathbf{u} = 1.1677$ &
		$\beta=10^{-2}$, $\mathbf{u}^T \bm{Q}_{\vec{u}} \mathbf{u} = 10.1306$ \\
		\includegraphics[scale=0.105]{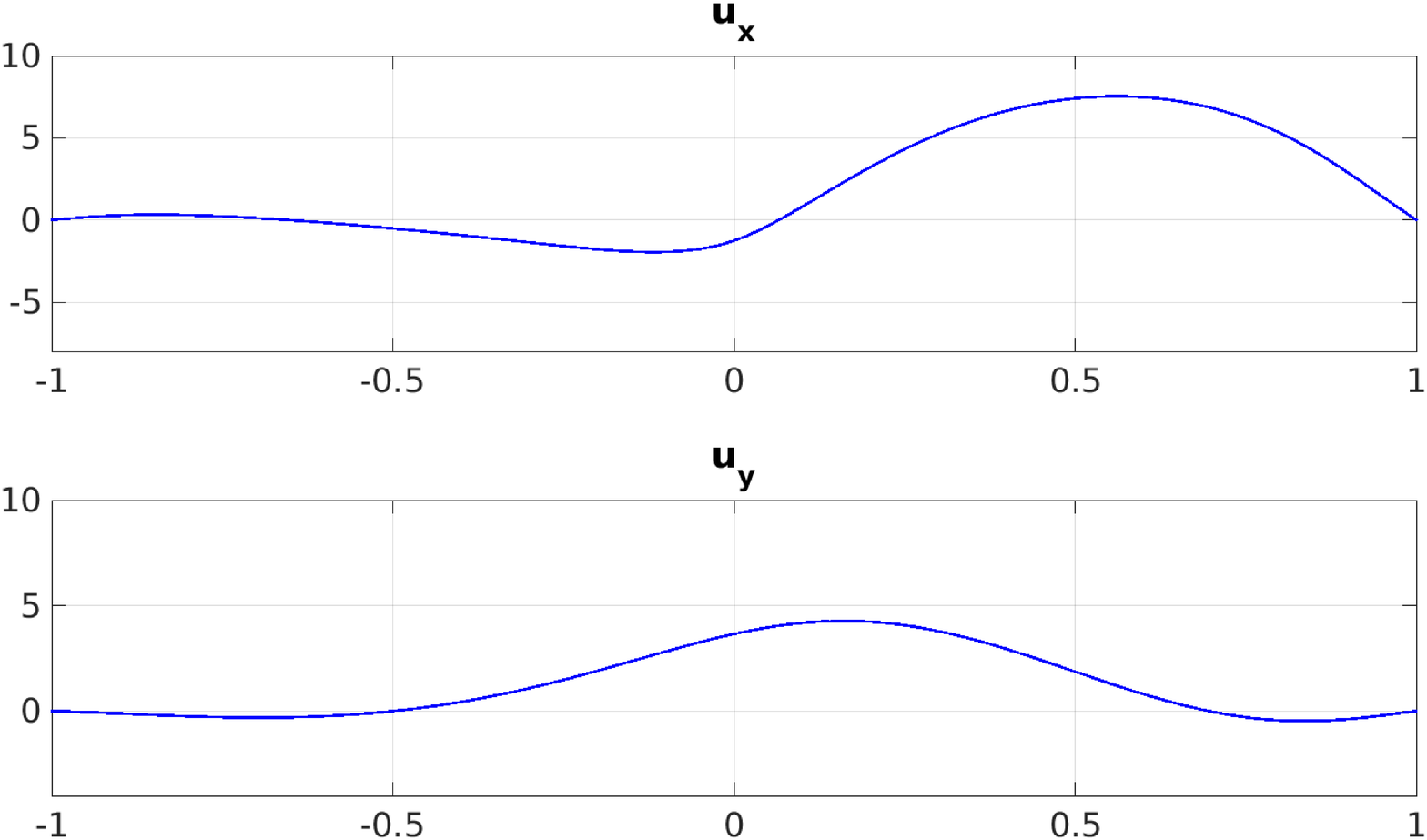} &
		\includegraphics[scale=0.105]{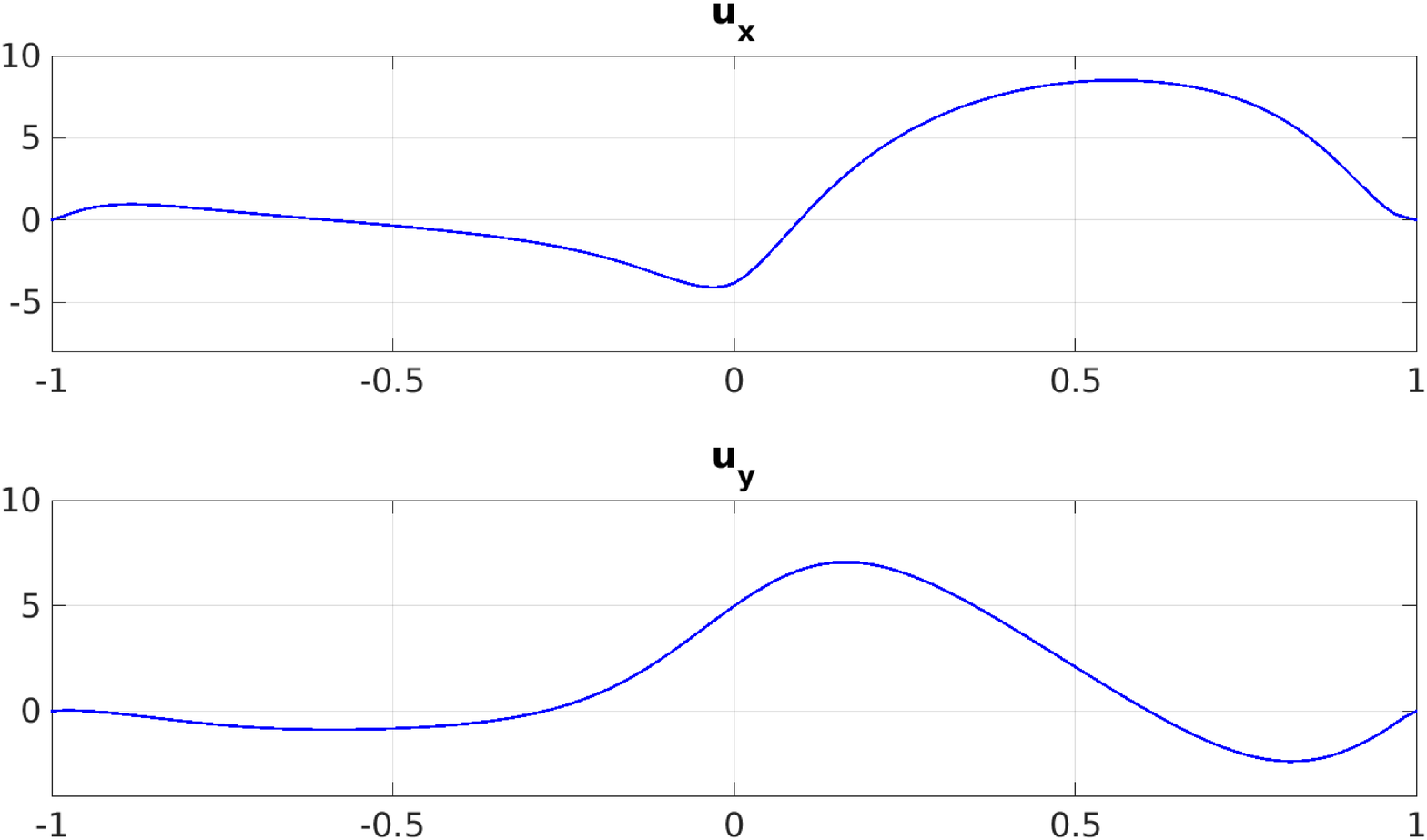} \\
		$\beta=10^{-3}$, $\mathbf{u}^T \bm{Q}_{\vec{u}} \mathbf{u} = 37.7790$ &
		$\beta=10^{-4}$, $\mathbf{u}^T \bm{Q}_{\vec{u}} \mathbf{u} = 59.6496$ \\
		\includegraphics[scale=0.105]{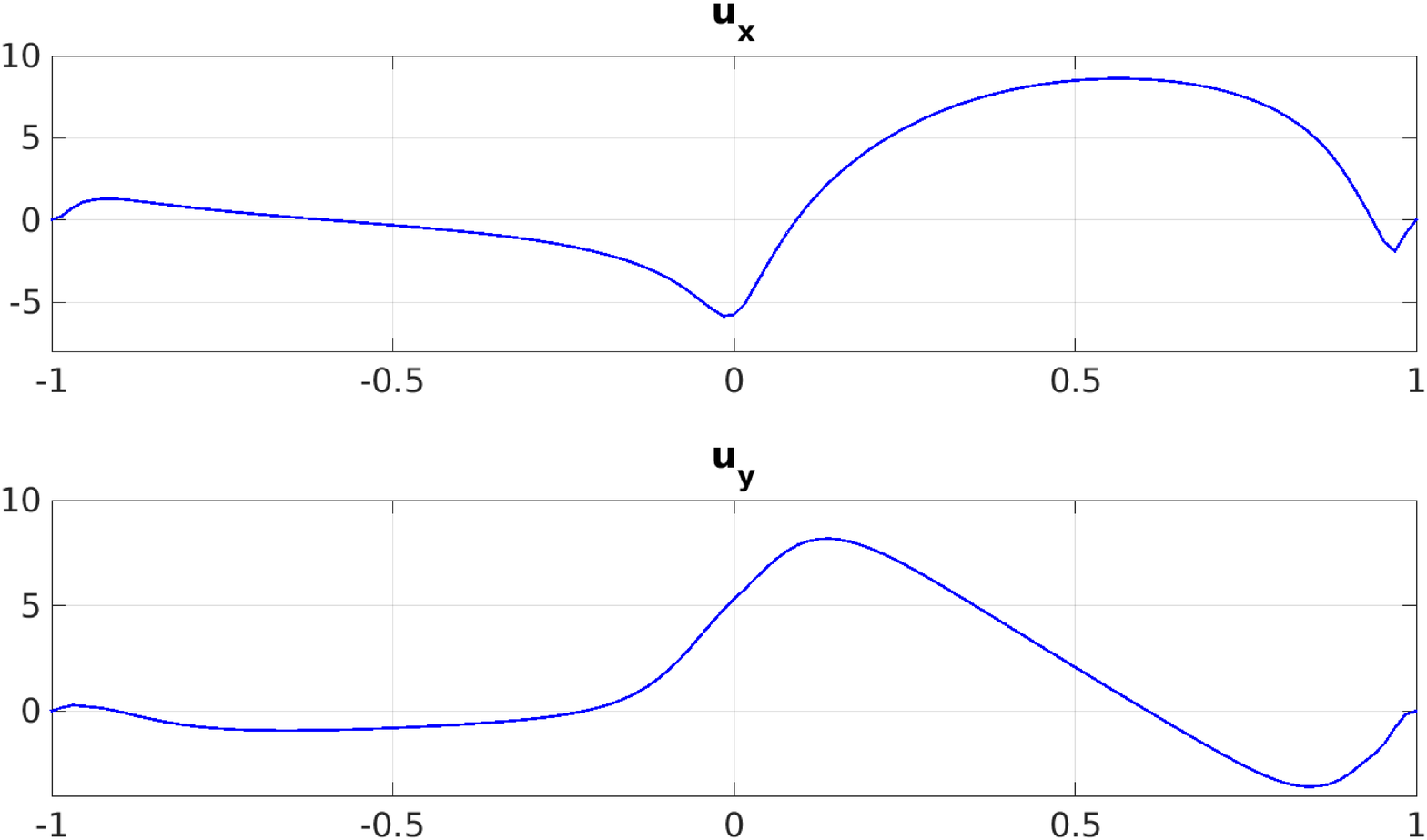} &
		\includegraphics[scale=0.105]{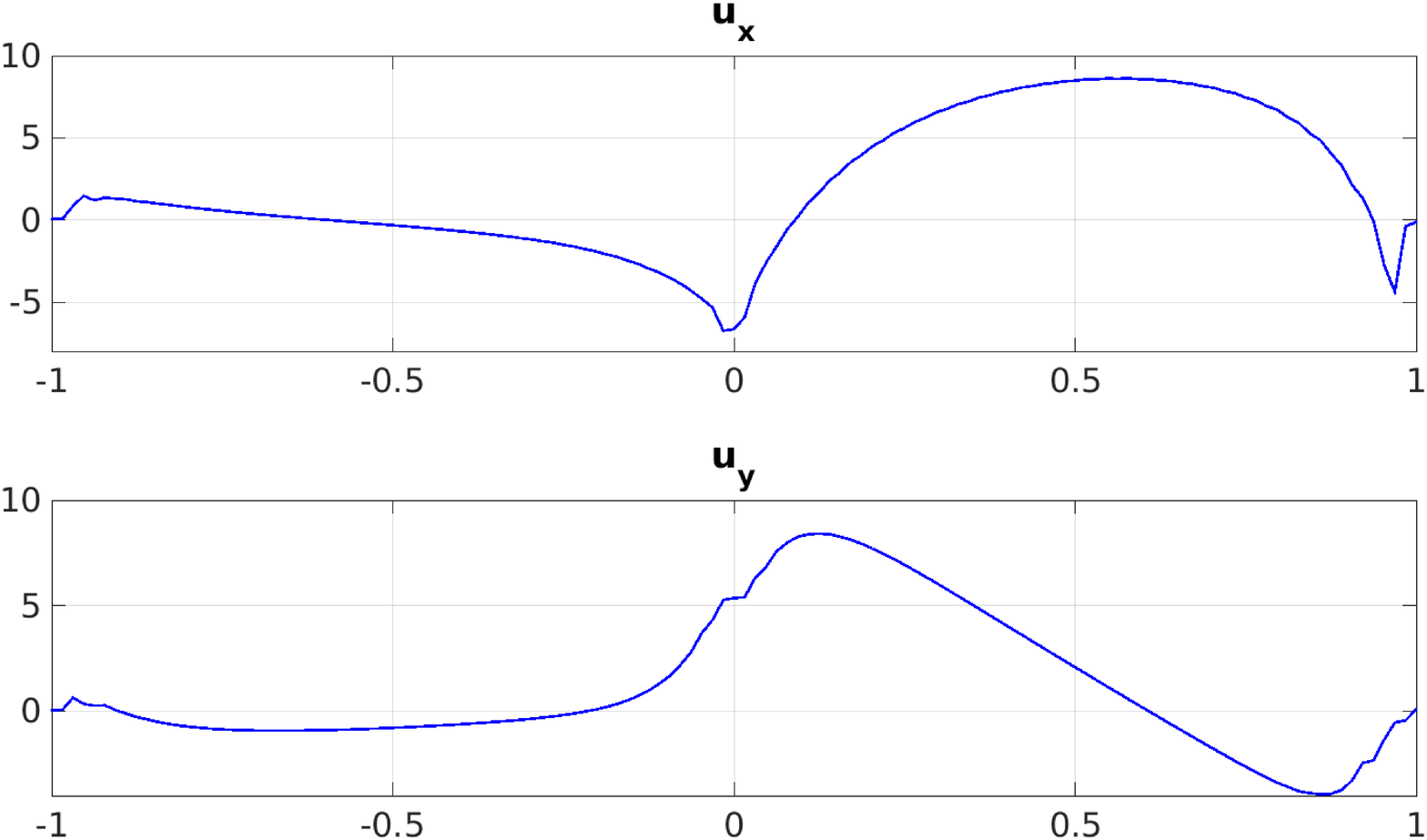} \\
		$\beta=10^{-5}$, $\mathbf{u}^T \bm{Q}_{\vec{u}} \mathbf{u} = 67.1488$ &
		$\beta=10^{-6}$, $\mathbf{u}^T \bm{Q}_{\vec{u}} \mathbf{u} = 68.8499$
	\end{tabular}
	\caption{Computed control with $\alpha = 10^{-3}$ and different $\beta$.} 
	\label{fig:stokescontrol}
\end{figure}

\begin{figure}[!]
	\centering
	\includegraphics[scale=0.2]{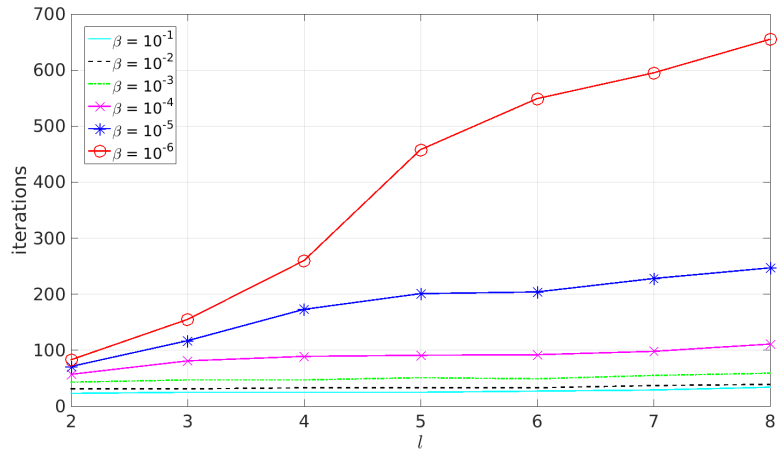}
	\caption{MINRES iteration counts with $\delta=10^{-3}$ and different $\beta$.} 
	\label{fig:differentbeta}
\end{figure}

The presumed upper bound for $\norm{\vec{u}}_{L^2(\partial\varOmega)}^2$ corresponds to a lower bound for 
$\norm{\vec{v}}_{L^2(\varOmega)}^2$. As can be seen in Figure~\ref{fig:stokessolutions}, even for relatively small $\beta$ we can only 
control the part of the channel close to the inflow; as it approaches the outflow, the flow profile resembles more and more the
Poiseuille flow we have seen in Section~\ref{sec:stokesproblem}. This coincides with our intuition: if the control can only be applied on 
some part of the boundary, its impact gets smaller the farther we move away from this boundary in the domain.

The preconditioning qualities are shown in Table~\ref{tab:precond}. The preconditioned MINRES iteration needs  
$\mathcal{O}(1)$ iterations and has a complexity of $\mathcal{O}(N)$ per iteration since only sparse matrix-vector products are required. 
Thus, the time required for the computation is linear in $N$. For relatively small matrices it is nevertheless outperformed by the MATLAB direct 
solver because of the constants involved. However, for very large matrices it is clearly better than the direct solver which has much higher 
memory requirements, which is why it achieves no solution for very fine grids. The MINRES method only needs to store vectors. The growth
rates of the computational time also suggest that MINRES should be faster for $h \leq 2^{-8}$ even with sufficient storage possibilities for $\varOmega \subset \R^2$.

\begin{table}[b!]
	\centering
	\caption{MINRES and MATLAB \texttt{backslash} performance on a $2^{-l}$ grid with $\alpha=\beta=10^{-3}$; the dash `---' indicates
	failure of the direct method.}
  	\label{tab:precond}
  	\begin{tabular}{|c||c|c|c|c|c|c|c|c|}
    	\hline
    	$l$                     & $2$   & $3$   & $4$     & $5$     & $6$        & $7$      & $8$       & $9$          \\ \hline\hline
    	DoF                     & $128$ & $392$ & $1,352$ & $5,000$ & $19,208$   & $75,272$ & $297,992$ & $1,185,800$  \\ \hline
    	MINRES iterations       & 43    & 48    & 47      & 54      & 54         & 57       & 61        & 63           \\ \hline
    	MINRES time             & 1.28  & 1.80  & 2.43    & 4.33    & 9.58       & 32.28    & 157.88    &  618.06      \\ \hline
    	\texttt{backslash} time & 0.01  & 0.01  & 0.04    & 0.2     &1.46        & 11.4     & ---       & ---          \\ \hline
  	\end{tabular}
\end{table}

The dependence of preconditioning qualities on the control regularization parameter $\beta$ is shown in Figure~\ref{fig:differentbeta}. We 
can observe the parameter-independence of the preconditioner for relatively large values of $\beta$. For the smallest values, especially 
$\beta=10^{-6}$, this behaviour deteriorates, but still the results compare favourably with the case of distributed control presented by 
Rees and Wathen \cite{rees2011preconditioning}.

\subsection{Navier--Stokes control} \label{subsec:navier}

\begin{figure}[b!]
	\centering
	\begin{tabular} {cc}
		\includegraphics[scale=0.105]{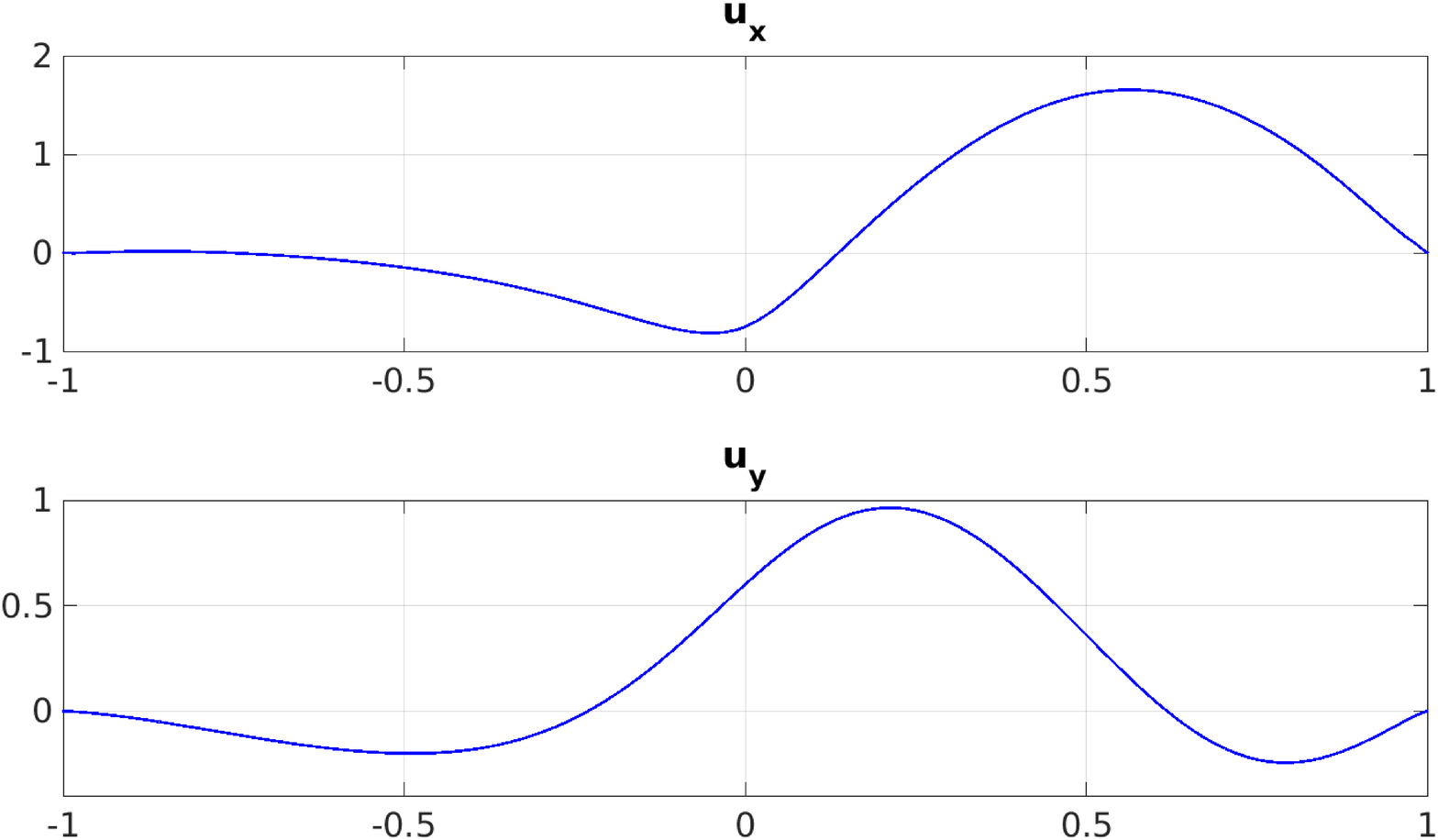} &
		\includegraphics[scale=0.105]{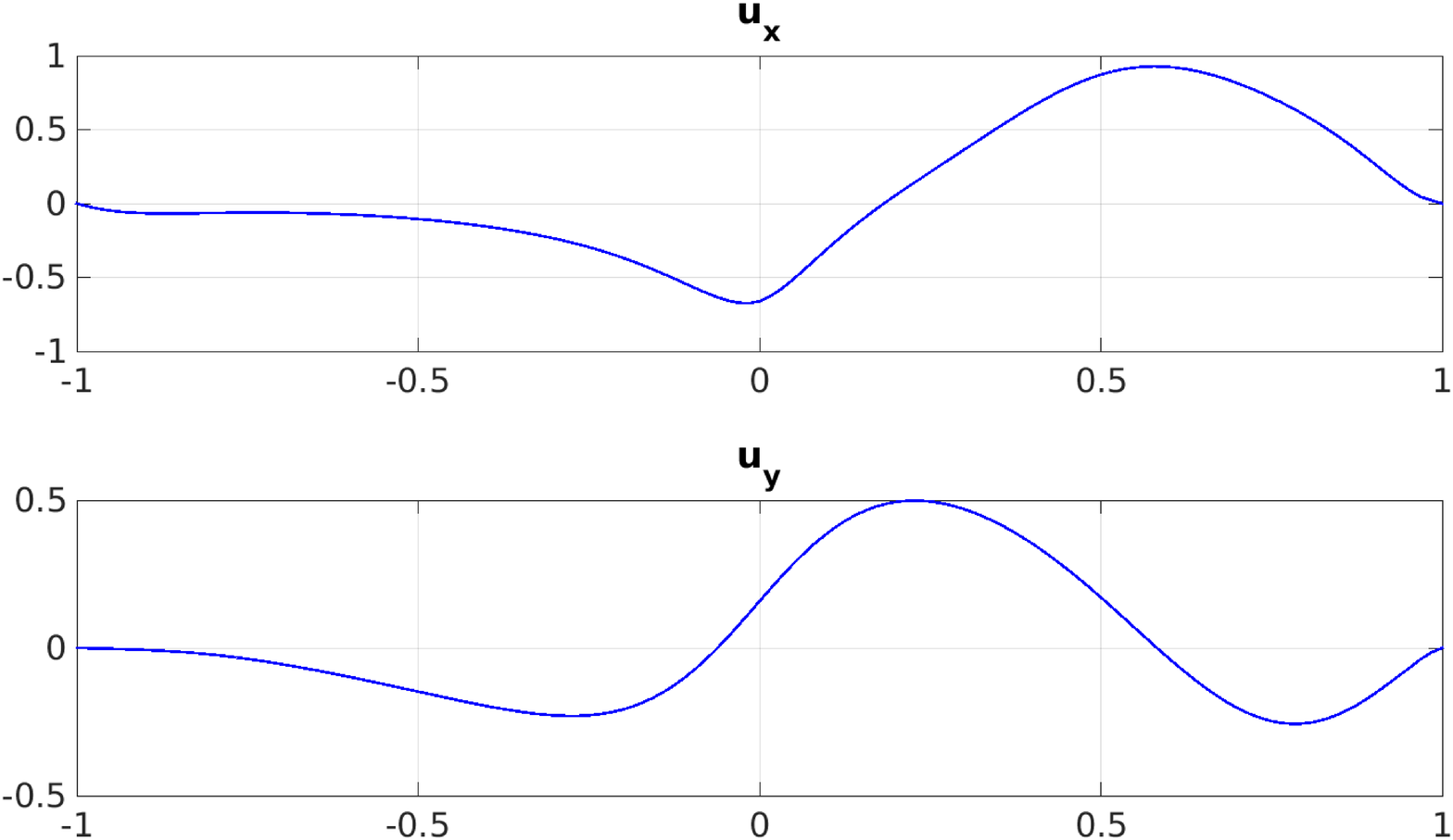} \\
		$\nu=1/5$, $\mathbf{u}^T \bm{Q}_{\vec{u}} \mathbf{u} = 1.7523$ &
		$\nu=1/10$, $\mathbf{u}^T \bm{Q}_{\vec{u}} \mathbf{u} = 0.5458$ \\
		\includegraphics[scale=0.105]{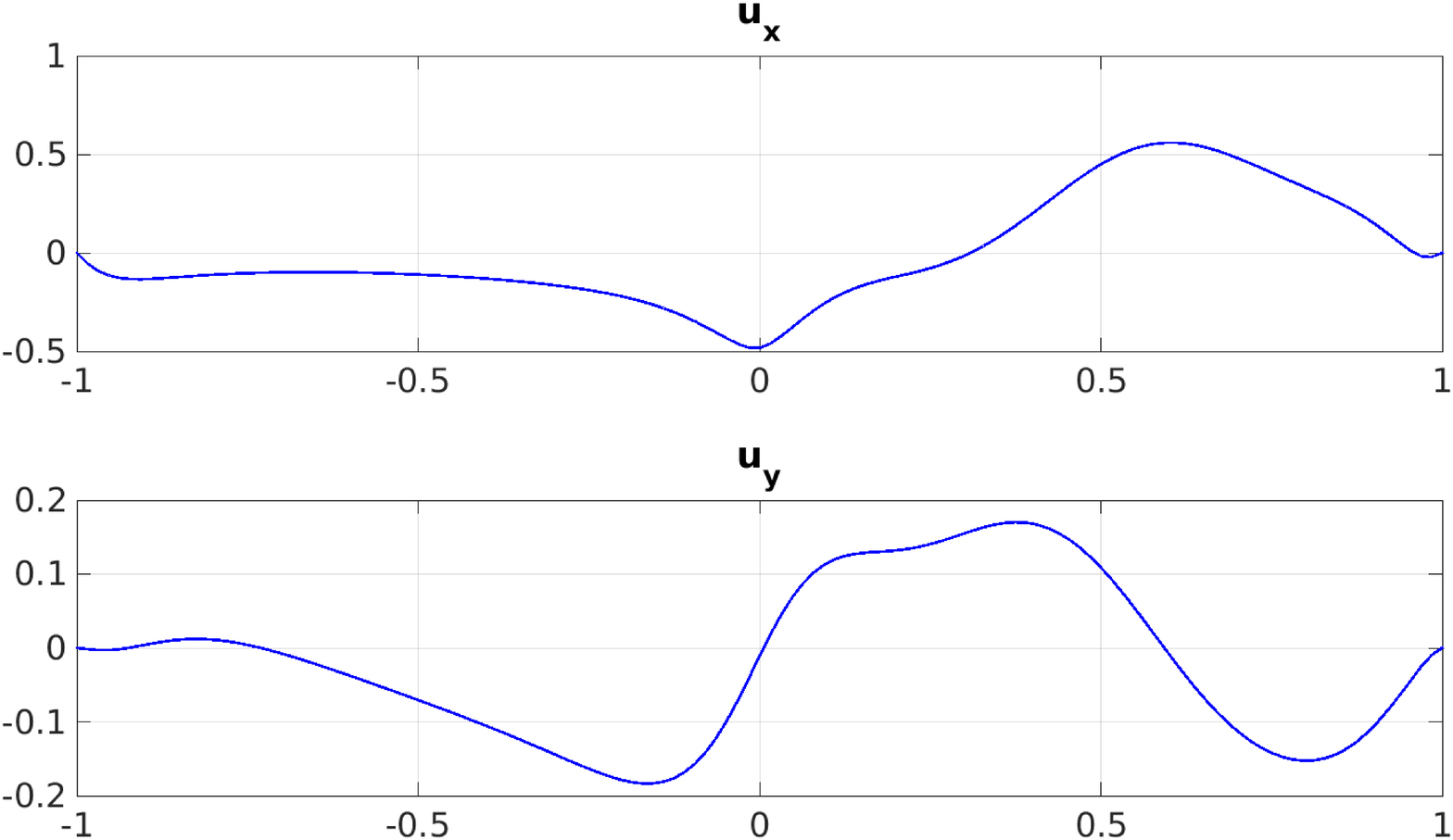} &
		\includegraphics[scale=0.105]{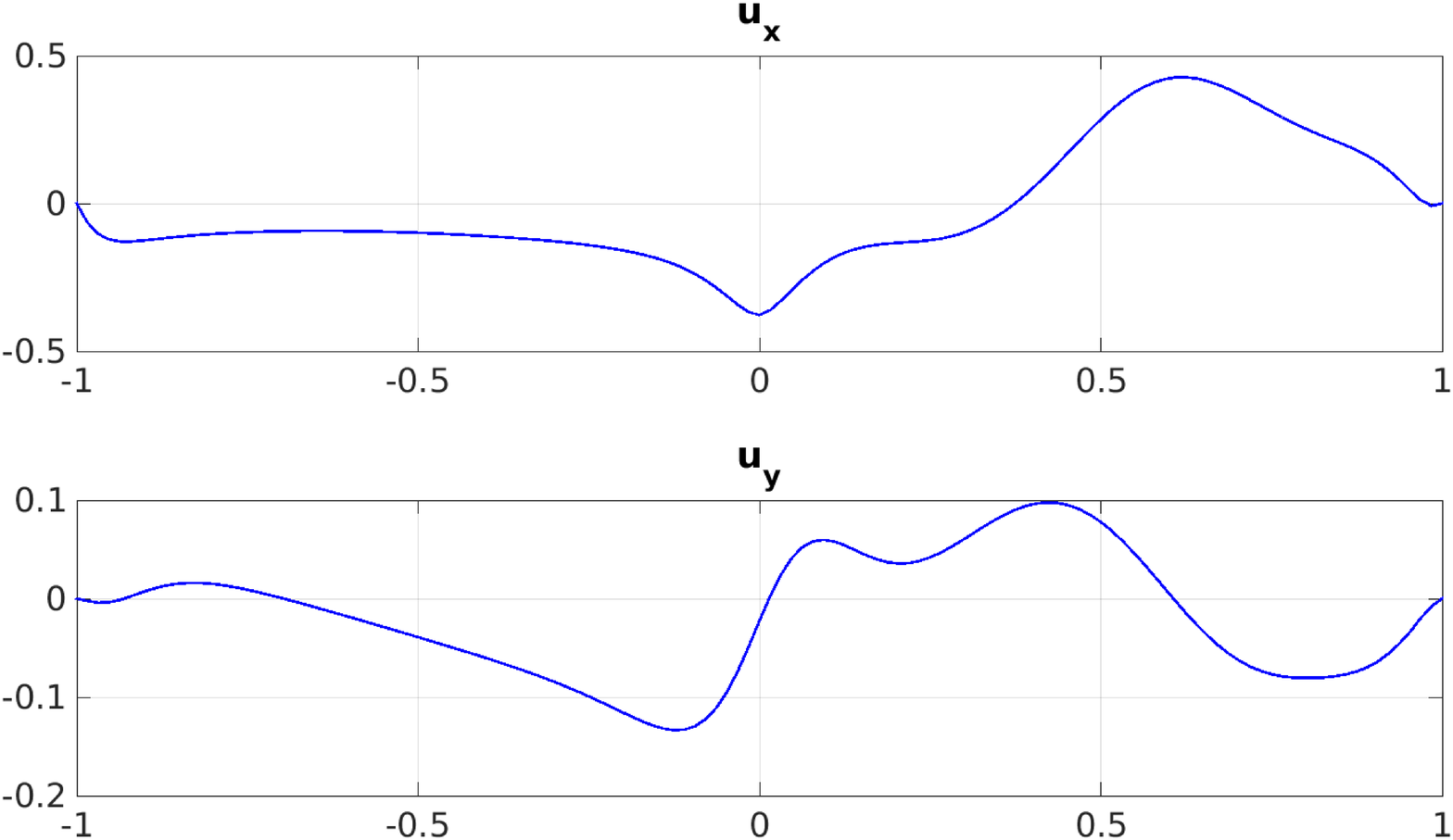} \\
		$\nu=1/20$, $\mathbf{u}^T \bm{Q}_{\vec{u}} \mathbf{u} = 0.1727$ &
		$\nu=1/30$, $\mathbf{u}^T \bm{Q}_{\vec{u}} \mathbf{u} = 0.0929$
	\end{tabular}
	\caption{Computed control with different viscosity parameters $\nu$.} 
	\label{fig:naviercontrol}
\end{figure}

We apply the permutational Rees--Wathen type preconditioner from Section~\ref{sec:navierprecond} with 30 Uzawa iterations as a Braess--Peisker approximation for the Oseen operator. Numerical experiments by Bramble et al. \cite[Section~6]{bramble1999uzawa} suggest using
a $\delta$ in Algorithm~\ref{alg:uzawa} depending on the viscosity parameter $\nu$; our choice is $\delta=\nu$. The parameter $\beta$ is 
chosen to be equal  to one. As a preconditioner for the (1,1) block in the discrete Oseen operator we chose a modified incomplete Cholesky
decomposition \cite{gustafsson1978class} implemented in MATLAB because the AMG solver used for the Stokes control problem in the previous 
chapter fails to construct meaningful algebraic course grid spaces here; we observe no sufficient smoothing. Note that incomplete 
decompositions in general do not give a grid-independent preconditioner; typical behaviour of the condition number of the preconditioned 
system is $\kappa=\mathcal{O}(h^{-1})$. The Schur complement approximation in the Uzawa type iteration is achieved, as usual, by a Chebyshev 
semi-iteration based on the pressure mass matrix.

\begin{figure}[h!]
	\centering
	\includegraphics[scale=0.2]{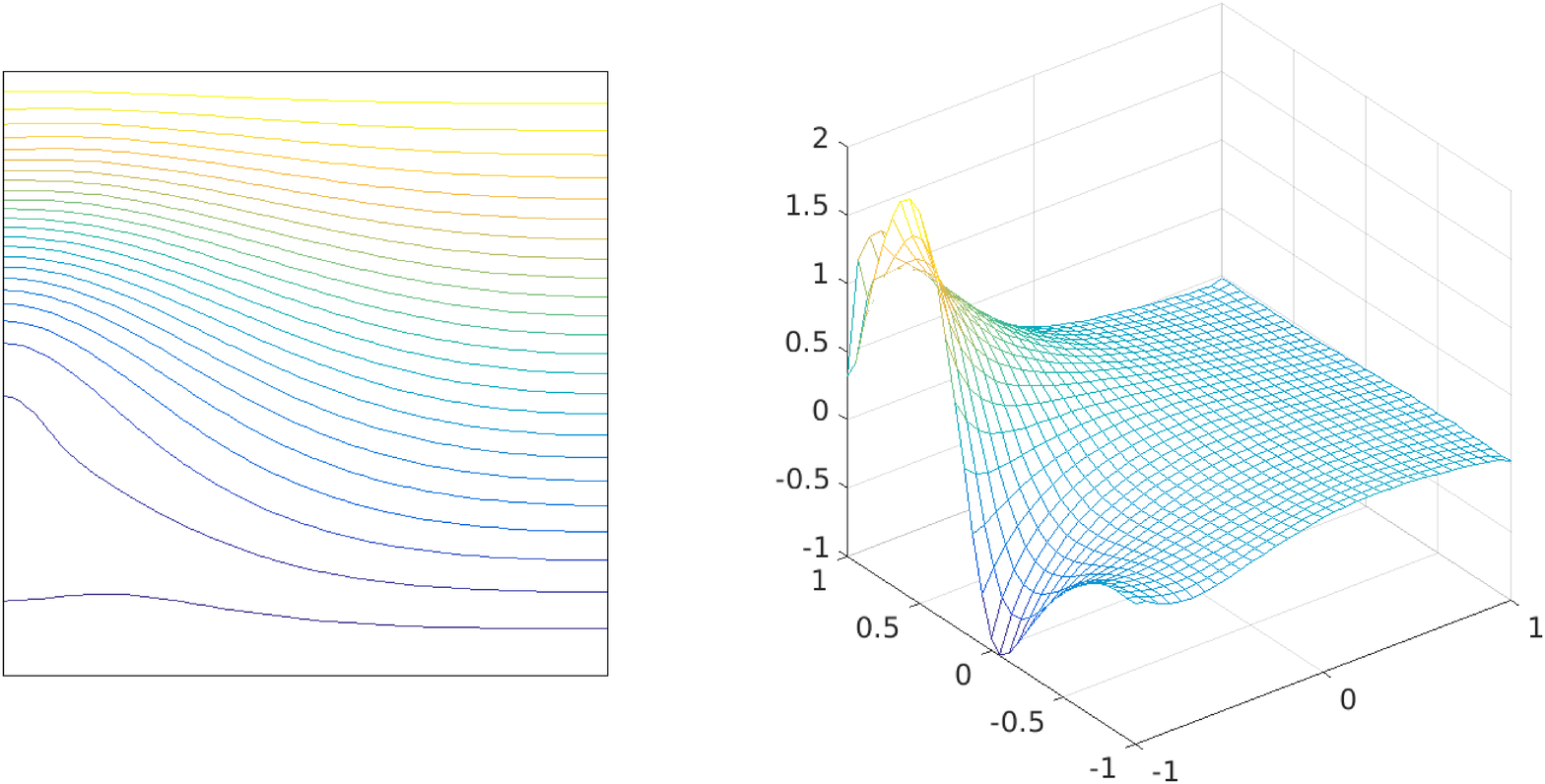} \\
	\includegraphics[scale=0.2]{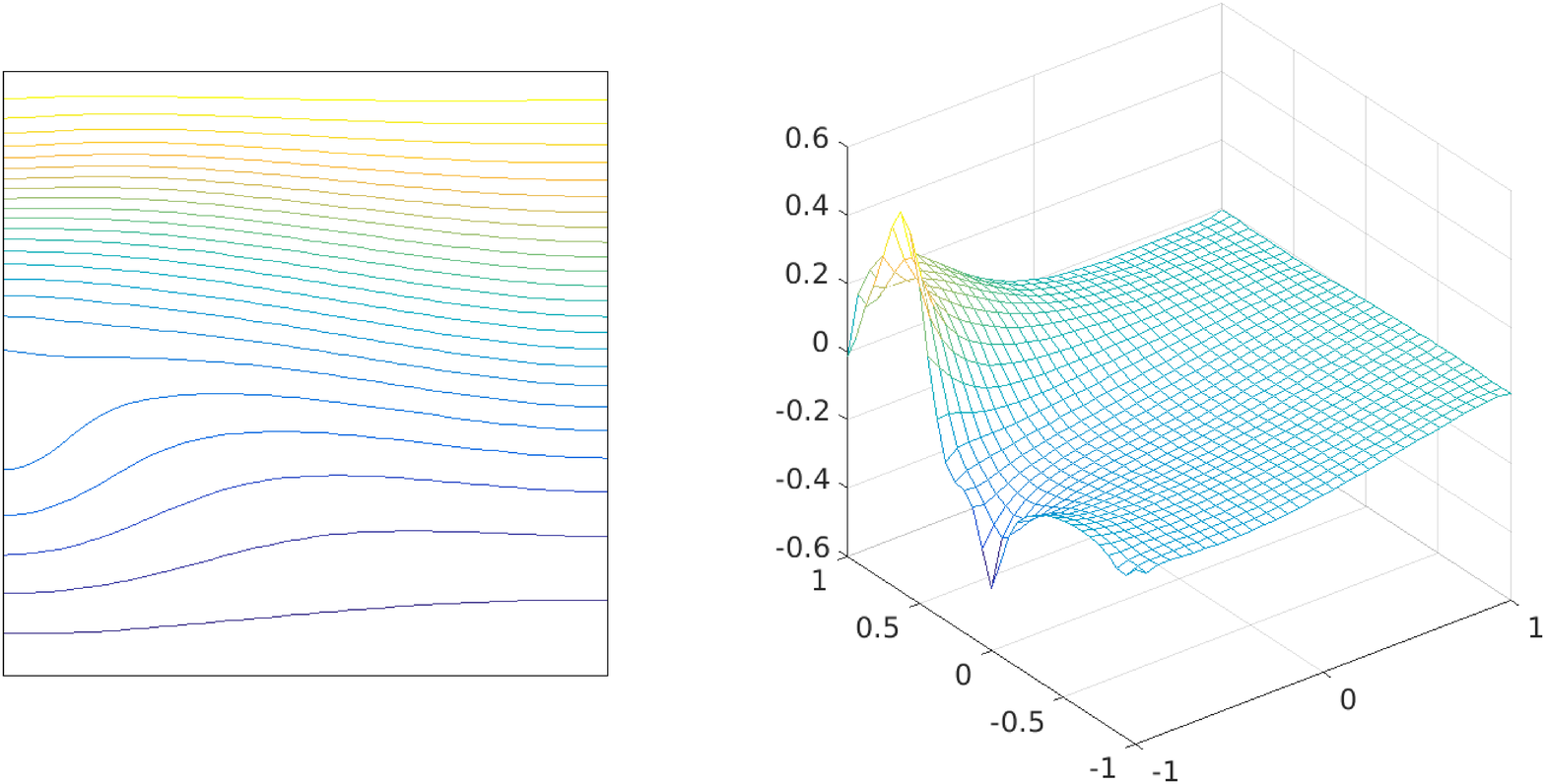}
	\caption{Solution of the test problem for $\nu = 1/5$ (top) and $\nu=1/30$ (bottom).} 
	\label{fig:naviersolutions}
\end{figure}

\begin{table}[!]
	\centering
	\caption{Average iteration counts and number of nonlinear iterations (in parentheses) for MINRES with the permutational preconditioner
	removing every, every second, every 4th, every 6th and every 8th inflow node (from top to bottom). The dash `---' indicates Uzawa 
	divergence due to indefiniteness.}
  	\label{tab:precondnodes}
  	\begin{tabular}{|c||x{1.7cm}|x{1.7cm}|x{1.7cm}|x{1.7cm}|x{1.7cm}|}
    	\hline
    	\diagbox{$\nu$}{$l$}   & $2$         & $3$           & $4$           & $5$            & $6$            \tabularnewline \hline\hline
    	$1/5$                  & $48\,(8)$   & $70\,(8)$     & $108\,(8)$    & $234\,(8)$     & $705\,(8)$     \tabularnewline \hline
    	$1/10$                 & $76\,(10)$  & $117\,(10)$   & $164\,(9)$    & $273\,(9)$     & $718\,(9)$     \tabularnewline \hline
    	$1/20$                 & ---         & $196\,(14)$   & $310\,(14)$   & $495\,(14)$    & $1,449\,(15)$  \tabularnewline \hline
    	$1/30$                 & ---         & $273\,(18)$   & $429\,(16)$   & $775\,(18)$    & $2,517\,(18)$  \tabularnewline \hline
  	\end{tabular} \\
  	\vspace{1cm}
  	\begin{tabular}{|c||x{1.7cm}|x{1.7cm}|x{1.7cm}|x{1.7cm}|x{1.7cm}|}
    	\hline
    	\diagbox{$\nu$}{$l$}   & $2$         & $3$           & $4$           & $5$            & $6$            \tabularnewline \hline\hline
    	$1/5$                  & $48\,(8)$   & $70\,(8)$     & $99\,(7)$     & $202\,(8)$     & $706\,(8)$     \tabularnewline \hline
    	$1/10$                 & $76\,(10)$  & $124\,(10)$   & $159\,(10)$   & $219\,(10)$    & $626\,(10)$    \tabularnewline \hline
    	$1/20$                 & ---         & ---           & $331\,(14)$   & $411\,(14)$    & $1,347\,(15)$  \tabularnewline \hline
    	$1/30$                 & ---         & ---           & $563\,(16)$   & $667\,(17)$    & $2,285\,(18)$  \tabularnewline \hline
  	\end{tabular} \\
  	\vspace{1cm}
  	\begin{tabular}{|c||x{1.7cm}|x{1.7cm}|x{1.7cm}|x{1.7cm}|x{1.7cm}|}
    	\hline
    	\diagbox{$\nu$}{$l$}   & $2$         & $3$           & $4$           & $5$            & $6$            \tabularnewline \hline\hline
    	$1/5$                  & ---         & $62\,(8)$     & $83\,(7)$     & $138\,(8)$     & $425\,(8)$     \tabularnewline \hline
    	$1/10$                 & ---         & $123\,(10)$   & $136\,(10)$   & $188\,(10)$    & $436\,(10)$    \tabularnewline \hline
    	$1/20$                 & ---         & ---           & ---           & $457\,(15)$    & $1,042\,(15)$  \tabularnewline \hline
    	$1/30$                 & ---         & ---           & $819\,(16)$   & ---            & $1,727\,(18)$  \tabularnewline \hline
  	\end{tabular} \\
  	\vspace{1cm}
  	\begin{tabular}{|c||x{1.7cm}|x{1.7cm}|x{1.7cm}|x{1.7cm}|x{1.7cm}|}
    	\hline
    	\diagbox{$\nu$}{$l$}   & $2$         & $3$           & $4$           & $5$            & $6$            \tabularnewline \hline\hline
    	$1/5$                  & ---         & $66\,(8)$     & $73\,(7)$     & $133\,(7)$     & $346\,(8)$     \tabularnewline \hline
    	$1/10$                 & ---         & ---           & ---           & $200\,(10)$    & $406\,(10)$    \tabularnewline \hline
    	$1/20$                 & ---         & $367\,(14)$   & $1,294\,(14)$ & ---            & $928\,(15)$    \tabularnewline \hline
    	$1/30$                 & ---         & ---           & ---           & ---            & $3,341\,(19)$  \tabularnewline \hline
  	\end{tabular} \\
  	\vspace{1cm}
  	\begin{tabular}{|c||x{1.7cm}|x{1.7cm}|x{1.7cm}|x{1.7cm}|x{1.7cm}|}
    	\hline
    	\diagbox{$\nu$}{$l$}   & $2$         & $3$           & $4$           & $5$            & $6$            \tabularnewline \hline\hline
    	$1/5$                  & ---         & ---           & $72\,(8)$     & $120\,(8)$     & $318\,(8)$     \tabularnewline \hline
    	$1/10$                 & ---         & ---           & ---           & $192\,(10)$    & $381\,(10)$    \tabularnewline \hline
    	$1/20$                 & ---         & ---           & ---           & ---            & $970\,(15)$    \tabularnewline \hline
    	$1/30$                 & ---         & ---           & ---           & ---            & ---            \tabularnewline \hline
  	\end{tabular} \\
\end{table}

A typical solution of the test problem is shown in Figure~\ref{fig:naviersolutions}. As we would expect, for a viscous, heavily
diffusion-dominated flow with $\nu=1/5$ the optimal solution is very similar to the Stokes case in Figure~\ref{fig:stokessolutions}. For a 
less viscous flow with $\nu=1/30$, the optimal solution is perceivably different and much closer to the desired flow profile. Also, the 
pressure difference needed to maintain the flow is smaller, which is consistent with the general theory for the Navier--Stokes equations.

Figure~\ref{fig:naviercontrol} shows the computed control for different values of $\nu$. The $\bm{Q}_{\vec{u}}$-norm of the control,
and hence the energy required to obtain the optimal solution, decreases with the viscosity parameter $\nu$. This means that for relatively
high Reynolds numbers not only is the optimal solution closer to the desired state but it is also (in a physical sense) cheaper to achieve.
Also, we observe that the qualitative behaviour of the control, i.\,e. the number and location of maxima and minima, is similar to the
Stokes case in Figure~\ref{fig:stokescontrol} for low Reynolds numbers. For higher Reynolds numbers, i.\,e. $\mathcal{R} \geq 20$, we
observe a qualitatively different behaviour.

The preconditioning qualities are shown in Table~\ref{tab:precondnodes}. In contrast to the Stokes case, we do not observe
grid-independent behaviour. This is related to two issues: first, we lose some of the structure of our problem by permuting the rows and
columns of the system matrix and dropping a low-rank perturbation; second, in contrast to AMG, the incomplete Cholesky preconditioner
applied here cannot guarantee grid-independent behaviour. Nevertheless, we do achieve useful preconditioning qualities, especially
in the case $\nu=1/10$. We can conclude that preconditioners of this kind can be useful for Navier-Stokes problems with relatively low
Reynolds numbers. For coarse grids, the removal of every inflow node is a good choice, for finer grids (which is probably more interesting 
in realistic applications) it appears advantageous to remove only some of the inflow nodes. However, if not enough inflow nodes are removed,
we risk getting an indefinite system, especially on relatively coarse grids.

\section{Conclusions and possible extensions} \label{sec:conclusions}

In this article, we extended the application of a preconditioner for distributed Stokes control problems presented in the literature to the 
case of boundary control. It speeds up the convergence of MINRES considerably; as long as the regularization parameter $\beta$ is not too 
small the convergence is independent of the grid size. But even for small values of $\beta$ we get useful preconditioning properties. We 
provide a theoretical explanation for this in the form that for low $\beta$ the preconditioning only deteriorates in terms of a low-rank 
perturbation. We believe that low-rank structures can be exploited in preconditioning for a range of optimal control problems, another 
example in recent literature is \cite{ye2013some}. An objective for future research might be the development of parameter-independent 
preconditioners for Stokes boundary control, in the sense introduced in \cite{pearson2012new}.

In a next step, we have discussed the applicability of preconditioners of this type to Navier--Stokes boundary control problems. We have
seen that applicability is limited due to the intrinsic structures of Navier--Stokes Neumann boundary value problems, namely the
indefiniteness of the symmetric perturbation of the convection operator. We have presented a way to deal with these issues for problems
governed by Navier--Stokes equations with low Reynolds numbers. Our heuristic strategy can be justified in terms of a low rank perturbation.
We have presented numerical results that support the theoretical reasoning. This preconditioner does not show grid-independent behaviour
because we do not have a grid-independent approximation for the non-standard finite element operator involved. We believe that the 
performance of this preconditioner can be greatly improved by the application of an appropriate multigrid solver. The development of such a 
solver would require a closer analysis of this operator and could be the subject of future work.

A natural extension of the framework presented here is time-dependent boundary 
control, this can be combined with the work done in 
\cite{stoll2013allatonce} for time-dependent distributed control. In our test 
problems the energy of the control is typically
low, in fact, there is numerical evidence that it might be bounded above 
independently of the regularization parameter $\beta$. However,
for other applications it is conceivable that control constraints might be 
useful; this would require an active-set strategy as implemented in 
\cite{stoll2012preconditioning}. State constraints might also be useful in some 
settings, we refer to recent work in
\cite{herzog2010preconditioned,pearson2014preconditioners}. Another issue is the 
potential non-smoothness and even non-continuity of the 
control; if this is undesired from a practical point of view, it could possibly 
be dealt with by introducing regularization in a Sobolev 
seminorn, see \cite{john2014optimal}; practical preconditioners for these 
problem formulations are yet to be developed.

\section*{Acknowledgements}

The first author has been supported by the German Academic Scholarship Foundation during his stay in Oxford.

\bibliographystyle{wileyj}
\bibliography{precond}

\end{document}